\newtheorem{thm}{Theorem}[section]
\newtheorem{lem}[thm]{Lemma}
\newtheorem{cor}[thm]{Corollary}
\newtheorem{prop}[thm]{Proposition}
\newtheorem{ex}[thm]{Example}
\newtheorem{qu}[thm]{Question}
\newtheorem{con}[thm]{Conjecture}
\newtheorem*{prob*}{Open problem}
\theoremstyle{definition}
\newtheorem{defi}[thm]{Definition}
\theoremstyle{remark}
\newtheorem{rem}[thm]{Remark}
\newtheorem*{rem*}{Remark}
\DeclareMathOperator{\id}{id}
\DeclareMathOperator{\rad}{rad}
\DeclareMathOperator{\Hom}{Hom}
\DeclareMathOperator{\Aff}{Aff}
\DeclareMathOperator{\Aut}{Aut}
\newcommand{\kringel}{\mathbin{\raise1pt\hbox{$\scriptstyle\circ$}}}
\newcommand{\pkt}{\mathbin{\raise0pt\hbox{$\scriptstyle\bullet$}}}
\newcommand{\C}{\mathbb{C}}
\newcommand{\R}{\mathbb{R}}
\newcommand{\tr}{\mathop{\rm tr}}
\newcommand{\ad}{{\rm ad}}
\newcommand{\End}{{\rm End}}
\newcommand{\Der}{{\rm Der}}
\newcommand{\Isom}{{\rm Isom}}
\newcommand{\La}{\mathfrak{a}}
\newcommand{\Lb}{\mathfrak{b}}
\newcommand{\Ld}{\mathfrak{d}}
\newcommand{\Lg}{\mathfrak{g}}
\newcommand{\Lh}{\mathfrak{h}}
\newcommand{\Ll}{\mathfrak{l}}
\newcommand{\Ln}{\mathfrak{n}}
\newcommand{\Lr}{\mathfrak{r}}
\newcommand{\Ls}{\mathfrak{s}}
\newcommand{\CD}{\mathcal{D}}
\newcommand{\im}{\mathop{\rm im}}
\newcommand{\al}{\alpha}
\newcommand{\be}{\beta}
\newcommand{\ga}{\gamma}
\newcommand{\de}{\delta}
\newcommand{\ep}{\varepsilon}
\newcommand{\ka}{\kappa}
\newcommand{\la}{\lambda}
\newcommand{\ra}{\rightarrow}
\renewcommand{\phi}{\varphi}
\begin{document}


\title[Post-Lie Algebra Structures]{Post-Lie algebra structures on pairs of Lie algebras}

\author[D. Burde]{Dietrich Burde}
\author[K. Dekimpe]{Karel Dekimpe}
\address{Fakult\"at f\"ur Mathematik\\
Universit\"at Wien\\
  Oskar-Morgenstern-Platz 1\\
  1090 Wien \\
  Austria}
\email{dietrich.burde@univie.ac.at}
\address{Katholieke Universiteit Leuven\\
Campus Kortrijk\\
8500 Kortrijk\\
Belgium}
\email{karel.dekimpe@kuleuven-kortrijk.be}

\date{\today}

\subjclass[2000]{Primary 17B30, 17D25}
\keywords{Post-Lie algebra, Pre-Lie algebra}
\thanks{The first author acknowledges support by the Austrian Science Foundation FWF, grant P28079.}
\thanks{The second author expresses his gratitude towards the Erwin Schr\"odinger International
Institute for Mathematical Physics}

\begin{abstract}
We study post-Lie algebra structures on pairs of Lie algebras $(\Lg,\Ln)$, which describe simply transitive
nil-affine actions of Lie groups. We prove existence results for such structures depending on the 
interplay of the algebraic structures of $\Lg$ and $\Ln$. We consider the classes of
simple, semisimple, reductive, perfect, solvable, nilpotent, abelian and unimodular Lie algebras.
Furthermore we consider commutative post-Lie algebra structures on perfect Lie algebras. Using
Lie algebra cohomology we can classify such structures in several cases. We also study 
commutative structures on low-dimensional Lie algebras and on nilpotent Lie algebras.
\end{abstract}

\maketitle

\section{Introduction}

Post-Lie algebras and post-Lie algebra structures are an important generalization of 
left-symmetric algebras (also called pre-Lie algebras) and left-symmetric algebra structures on Lie algebras,
which arise in many areas of algebra and geo\-metry \cite{BU24}, such as 
left-invariant affine structures on Lie groups, affine crystallographic groups, simply transitive affine 
actions on Lie groups, convex homogeneous cones, faithful linear representations of Lie algebras, 
operad theory and several other areas. A particular interesting problem with a long history concerns
crystallographic groups and crystallographic structures on groups. Here a Euclidean crystallographic structure
on a group $\Gamma$ is a representation $\rho\colon \Gamma\ra \Isom(\R^n)$ letting $\Gamma$ act properly
discontinuously and cocompactly on $\R^n$. By the Bieberbach theorems, the groups admitting a Euclidean
crystallographic structure are precisely the finitely generated virtually abelian groups. The torsion-free
crystallographic groups, called Bieberbach groups, are exactly the fundamental groups of compact flat Riemannian 
manifolds. John Milnor addressed the question how this result can be generalized to
{\it affine crystallographic structures} for $\Gamma$, i.e., for representations $\rho\colon \Gamma\ra \Aff(\R^n)$
letting $\Gamma$ act  properly discontinuously and cocompactly on $\R^n$. More precisely he asked whether or not 
every virtually polycyclic group admits an affine crystallographic structure. 
Although there was a lot of evidence for Milnor's question having a positive answer, counterexamples were 
found \cite{BENO,BU5}, in particular in terms of left-symmetric structures on Lie algebras. For the history and references see 
\cite{BU24}.
Another natural question then was whether one could find a reasonable class of generalized affine structures on $\Gamma$ such that
Milnor's question would have a positive answer. This was indeed possible and lead to the notion of a so-called
{\it nil-affine crystallographic structure} for $\Gamma$, i.e., a representation $\rho\colon \Gamma\ra \Aff(N)$,
letting $\Gamma$ act  properly discontinuously and cocompactly on a simply connected, connected nilpotent Lie group $N$.
Here $\Aff(N)=N\rtimes \Aut(N)$ denotes the affine group of $N$. It was shown in \cite{DE3} and \cite{BA2} that
every virtually polycyclic group admits a nil-affine crystallographic structure. The natural generalization
of left-symmetric structures, which correspond to the case $N=\R^n$,  are exactly the post-Lie algebra structures on
pairs of Lie algebras. This is the reason that these structures are so fundamental for nil-affine crystallographic
structures. In the nilpotent case, these structures can also be formulated by nil-affine actions of a nilpotent
Lie group $G$ on another nilpotent Lie group $N$, and such Lie group actions can be translated to the level of Lie algebras
leading directly to post-Lie algebra structures \cite{BU41}. \\
As in the case of left-symmetric structures, the existence question of post-Lie algebra structures
is very important (and very hard in general). We already studied an intermediate case, the so-called LR-structures on
Lie algebras in \cite{BU34, BU38}, and have obtained first results for the general case in \cite{BU41, BU44}.
In this paper we are able to prove more general results. One result concerns perfect Lie algebras $\Lg$, i.e.,
satisfying $[\Lg,\Lg]=\Lg$. It has been proved in \cite{SEG} that a perfect Lie algebra over a field of
characteristic zero does not admit a left-symmetric structure. We extend this result and show that
if $\Lg$ is perfect and $\Ln$ is solvable and non-nilpotent, then there is no post-Lie algebra structure 
on $(\Lg,\Ln)$. We will also prove that if $(\Lg,\Ln)$ is a pair of Lie algebras, where $\Lg$ is simple, and $\Ln$ is 
not isomorphic to $\Lg$, then there is no post-Lie algebra structure on $(\Lg,\Ln)$.
This shows that the assumption that $\Lg$ is simple is very strong. Already the case that $\Ln$ is semisimple
has strong implications. For example, there are no  post-Lie algebra structure on $(\Lg,\Ln)$ with $\Ln$ semisimple
and $\Lg$ solvable. \\
In the last part of the paper we study commutative post-Lie algebra structures. A post-Lie algebra structure 
on a pair $(\Lg,\Ln)$ is called {\it commutative}, if  the Lie algebras $\Lg$ and $\Ln$ coincide, so that the product 
is commutative. In this case we just write $\Lg$ instead of the pair $(\Lg,\Lg)$. In this special case one could
perhaps hope for a good classification result. We first show that any commutative 
post-Lie algebra structure on a semisimple Lie algebra is trivial, i.e., is given by the zero product. Then we consider 
commutative post-Lie algebra structures on perfect Lie algebras. This involves the study of the outer derivation 
algebra of a perfect Lie algebra, and more generally its Lie algebra cohomology. It is known that 
cohomological questions for perfect Lie algebras can be difficult, e.g., see \cite{BEN}, \cite{PIR}.
We prove that commutative post-Lie algebra structures on perfect Lie algebras are trivial in several cases, and
conjecture that this is always the case. We also classify commutative post-Lie algebra structures in low dimension. \\
Finally we mention that post-Lie algebra structures have become more popular also in other areas. As an example,
post-Lie algebras have been studied by Vallette and Loday in connection with homology of partition posets and 
the study of Koszul operads \cite{VAL}, \cite{LOD}. Also, post-Lie algebras have been studied in connection
with isospectral flows, Yang-Baxter equations, Lie-Butcher Series and Moving Frames \cite{ELM}.

\section{Preliminaries}

All Lie algebras are assumed to be finite-dimensional over a field $k$. For the structure results
we usually assume that $k$ is the field of complex numbers.
Post-Lie algebra structures on pairs of Lie algebras $(\Lg,\Ln)$ are defined as
follows, see \cite{BU41}:

\begin{defi}\label{pls}
Let $\Lg=(V, [\, ,])$ and $\Ln=(V, \{\, ,\})$ be two Lie brackets on a vector space $V$.
A {\it post-Lie algebra structure} on the pair $(\Lg,\Ln)$ is a $k$-bilinear product
$x\cdot y$ satisfying the identities:
\begin{align}
x\cdot y -y\cdot x & = [x,y]-\{x,y\} \label{post1}\\
[x,y]\cdot z & = x\cdot (y\cdot z) -y\cdot (x\cdot z) \label{post2}\\
x\cdot \{y,z\} & = \{x\cdot y,z\}+\{y,x\cdot z\} \label{post3}
\end{align}
for all $x,y,z \in V$.
\end{defi}

We denote by $L(x)$ the left multiplication operators of the algebra $A=(V,\cdot)$, and by $R(x)$ the right
multiplication operators. So we have $L(x)(y)=x\cdot y$ and $R(x)(y)=y\cdot x$ for all 
$x,y\in V$. By \eqref{post3}, all $L(x)$ are derivations of the Lie algebra $(V,\{,\})$. Moreover
by \eqref{post2}, the left multiplication
\[
L\colon \Lg\ra \Der(\Ln)\subseteq \End (V),\; x\mapsto L(x)
\]
is a linear representation of $\Lg$.
Suppose that $x\cdot y$ is a post-Lie algebra structure on the pair of Lie algebras
$(\Lg,\Ln)$ such that $\Ln$ is centerless and satisfies $\Der(\Ln)=\ad (\Ln)$.
Then there is a unique $\phi\in \End(V)$ such that 
\begin{align*}
x\cdot y &=\{\phi(x),y \} 
\end{align*}
for all $x,y\in V$, see Lemma $2.9$ in \cite{BU41}. This means $L(x)=\ad (\phi(x))$ for the linear operators $L(x)$.
In this case the axioms of a post-Lie algebra structure can be formulated as follows,
see \cite{BU41}:

\begin{prop}\label{2.2}
Let $\Ln$ be a semisimple Lie algebra and  $\phi\in \End(V)$. Then
the product $x\cdot y=\{\phi(x),y \}$ is a post-Lie algebra structure on $(\Lg,\Ln)$
if and only if
\begin{align*}
\{\phi(x),y\}+\{x,\phi(y)\} & =[x,y]-\{x,y\},\\
\phi([x,y]) & = \{\phi(x),\phi(y)\} 
\end{align*}
for all $x,y\in V$. 
\end{prop}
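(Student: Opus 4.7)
The plan is to prove both directions of the equivalence simultaneously by directly substituting the product $x\cdot y = \{\phi(x),y\}$ into each of the three defining axioms \eqref{post1}, \eqref{post2}, \eqref{post3} of a post-Lie algebra structure and using the Jacobi identity and centerlessness of the semisimple Lie algebra $\Ln$. The three axioms will correspond respectively to the first stated condition, an automatic identity, and (after an application of Jacobi and a cancellation) the second stated condition.

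First I would handle axiom \eqref{post1}. Substituting gives $\{\phi(x),y\}-\{\phi(y),x\} = [x,y]-\{x,y\}$, and skew-symmetry of $\{\,,\}$ rewrites the left-hand side as $\{\phi(x),y\}+\{x,\phi(y)\}$. Thus \eqref{post1} is equivalent to the first displayed identity of the proposition. Next I would check axiom \eqref{post3}: the left-hand side is $\{\phi(x),\{y,z\}\}$, while the right-hand side is $\{\{\phi(x),y\},z\}+\{y,\{\phi(x),z\}\}$, which is exactly the Jacobi identity for $\Ln$ applied to $\phi(x),y,z$. So \eqref{post3} is automatic for any $\phi$ whatsoever, which reflects the fact (already noted before the proposition) that $L(x)=\ad(\phi(x))$ is a derivation of $\Ln$.

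The remaining axiom \eqref{post2} is where the key step lies. Substituting yields
\begin{align*}
\{\phi([x,y]),z\} &= \{\phi(x),\{\phi(y),z\}\} - \{\phi(y),\{\phi(x),z\}\}.
\end{align*}
By the Jacobi identity in $\Ln$ the right-hand side equals $\{\{\phi(x),\phi(y)\},z\}$. Hence \eqref{post2} is equivalent to
\begin{align*}
\{\phi([x,y])-\{\phi(x),\phi(y)\},\,z\}=0 \quad \text{for all } z\in V.
\end{align*}
Since $\Ln$ is semisimple, its center is trivial, so this vanishing for all $z$ is equivalent to $\phi([x,y])=\{\phi(x),\phi(y)\}$, which is the second displayed identity of the proposition.

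The main (and essentially only) obstacle is the cancellation leading to the second identity: one must apply the Jacobi identity in $\Ln$ to collapse the double brackets, and then invoke the centerlessness of $\Ln$ (a consequence of semisimplicity) to pass from the vanishing of a bracket with every $z$ to the vanishing of the element itself. Both steps rely crucially on $\Ln$ being semisimple; without this hypothesis one only obtains that $\phi([x,y])-\{\phi(x),\phi(y)\}$ lies in the center of $\Ln$. Combining the three reductions gives the stated equivalence.
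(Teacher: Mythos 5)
Your proof is correct: substituting $x\cdot y=\{\phi(x),y\}$ into the three axioms, axiom \eqref{post3} reduces to the Jacobi identity of $\Ln$ and is automatic, axiom \eqref{post1} gives the first displayed identity via skew-symmetry, and axiom \eqref{post2} collapses by Jacobi to $\{\phi([x,y])-\{\phi(x),\phi(y)\},z\}=0$ for all $z$, which by centerlessness of the semisimple $\Ln$ yields the homomorphism condition. The paper does not reprint a proof here (it refers to \cite{BU41}), but this direct verification is exactly the intended argument.
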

The second condition says that $\phi\colon \Lg\ra \Ln$ is a Lie algebra homomorphism. 

\section{Existence results with respect to $\Lg$}

We study the existence question of post-Lie algebra structures on pairs $(\Lg,\Ln)$, where
the algebraic structure of $\Lg$ is given. \\[0.2cm]
{\it Case 1:} Assume, that $\Lg$ is a simple Lie algebra.
It turns out  that this assumption is very strong. We will show that a post-Lie algebra 
structure on $(\Lg,\Ln)$ with $\Lg$ simple exists if and only if $\Ln$ is isomorphic to $\Lg$.  
This generalizes Proposition $4.9$ in \cite{BU41}. Furthermore, if $\Ln\cong \Lg$ then
either the product $x\cdot y$ is trivial and $[x,y]=\{x,y\}$, or we have
$[x,y]=-\{x,y\}=x\cdot y$, see Proposition $4.6$ in \cite{BU41}.

\begin{thm}\label{3.1}
Let $(\Lg,\Ln)$ be a pair of Lie algebras, where $\Lg$ is simple and $\Ln$ is arbitrary.
Suppose that $(\Lg,\Ln)$ admits a post-Lie algebra structure. Then $\Ln$ is isomorphic to $\Lg$.
\end{thm}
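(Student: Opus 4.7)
The plan is to do a dichotomy on the kernel of the Lie-algebra homomorphism $L\colon\Lg\to\Der(\Ln)\subseteq\End(V)$ that exists by axioms \eqref{post2} and \eqref{post3}. Since $\Lg$ is simple, $\ker L$ is either $0$ or all of $\Lg$. If $\ker L=\Lg$, then $x\cdot y=L(x)(y)=0$ for every $x,y\in V$, and axiom \eqref{post1} collapses to $[x,y]=\{x,y\}$; thus $\Ln=\Lg$ already as Lie algebras on $V$.

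So assume $L$ is injective. My first goal is to show that $\Ln$ is semisimple. The solvable radical $\Lr=\rad(\Ln)$ is a characteristic ideal of $\Ln$, hence stable under each $L(x)$, so $\Lr$ is a $\Lg$-submodule of $V$. Since $\Ln/\Lr$ is semisimple and centerless, $\Der(\Ln/\Lr)=\ad(\Ln/\Lr)\cong\Ln/\Lr$, and the representation induced on the quotient can be written as $\bar L=\ad\circ\psi$ for a unique Lie-algebra homomorphism $\psi\colon\Lg\to\Ln/\Lr$. Simplicity of $\Lg$ leaves only two possibilities. In the first, $\psi$ is injective and $\dim\Lg\le\dim(\Ln/\Lr)\le\dim V=\dim\Lg$ forces $\Lr=0$. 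In the second, $\psi=0$, so $L(x)(V)\subseteq\Lr$ for all $x\in\Lg$, and axiom \eqref{post1} yields $[x,y]-\{x,y\}\in\Lr$; taking $y\in\Lr$ and using that $\Lr$ is an $\Ln$-ideal gives $[x,y]\in\Lr$, so $\Lr$ is an ideal of the simple Lie algebra $\Lg$ and hence $\Lr=0$ or $\Lr=V$.

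The surviving possibility $\Lr=V$, i.e.\ $\Ln$ solvable, is where I expect the main obstacle to lie. To dispose of it I would first iterate the same dichotomy with the nilradical $\nil(\Ln)$ in place of $\rad(\Ln)$ to reduce to $\Ln$ nilpotent, and then invoke the classical theorem of Segal \cite{SEG} that a perfect Lie algebra admits no left-symmetric structure. When $\Ln$ is abelian this finishes the argument at once, since the post-Lie product is then exactly a left-symmetric structure on $\Lg$; the general nilpotent case I would attempt to reduce to the abelian one by filtering $\Ln$ along its lower central series and using axiom \eqref{post3} to control the successive central quotients.

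Once $\Ln$ has been shown to be semisimple, Proposition~\ref{2.2} applies: the product has the form $x\cdot y=\{\phi(x),y\}$ for a Lie-algebra homomorphism $\phi\colon\Lg\to\Ln$. Injectivity of $L$ excludes $\phi=0$, so $\phi$ is nonzero and simplicity of $\Lg$ forces $\phi$ to be injective; since $\dim\Ln=\dim V=\dim\Lg$, $\phi$ is an isomorphism, proving $\Ln\cong\Lg$.
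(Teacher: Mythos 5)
Your overall architecture is the same as the paper's: a dichotomy forced by simplicity of $\Lg$ applied to the representation $L$, a Levi-type reduction showing that the radical of $\Ln$ must vanish, and an endgame via Proposition~\ref{2.2} identifying $\phi$ as an isomorphism. Those parts are correct (your quotient $\Ln/\rad(\Ln)$ plays exactly the role of the paper's projection $p\colon\Der(\Ln)\to\Der(\Ls)$ onto a Levi factor, and your closing argument with $\phi$ is in fact slightly cleaner than the paper's). The problem is the case you yourself flag as the main obstacle: $\rad(\Ln)=V$, i.e.\ $\Ln$ solvable. The paper does not prove this case at all; it kills it by quoting Theorem~4.2 of \cite{BU44} (no post-Lie algebra structure on $(\Lg,\Ln)$ with $\Lg$ semisimple and $\Ln$ solvable), and explicitly remarks that this is a non-trivial result. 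Your proposed substitute does not work. First, ``iterating the same dichotomy with the nilradical'' fails structurally: the mechanism of your dichotomy is that the quotient $\Ln/\rad(\Ln)$ is semisimple and centerless, so its derivations are inner and the induced representation factors as $\ad\circ\psi$. For a solvable $\Ln$ the quotient $\Ln/\nil(\Ln)$ is abelian, its derivation algebra is all of $\Lg\Ll(\Ln/\nil(\Ln))$, and no map $\psi$ into the quotient exists; so there is no reduction to the nilpotent case along these lines.

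Second, the reduction from $\Ln$ nilpotent to $\Ln$ abelian by ``filtering along the lower central series'' is not a proof but a hope, and it is precisely the hard point. A post-Lie structure on $(\Lg,\Ln)$ with $\Ln$ nilpotent is \emph{not} a left-symmetric structure on $\Lg$ (that identification, via $x\circ y=x\cdot y+\tfrac12\{x,y\}$, only works for $\Ln$ at most $2$-step nilpotent, which is the content of Proposition~4.2 of \cite{BU41} used in the paper's Case~3); for higher nilpotency class the successive central quotients do not assemble back into an obstruction in any obvious way. Indeed the paper leaves exactly this question open for general perfect $\Lg$ (the Question in Case~3), and even for the single $5$-dimensional example $\Ls\Ll_2(\C)\ltimes V(2)$ it needs a page of ad hoc case analysis. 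So as written your argument only covers $\Ln$ abelian (where the appeal to Segal's theorem is legitimate); the solvable non-abelian case is a genuine gap, which you should close by citing Theorem~4.2 of \cite{BU44} as the paper does, or by supplying its proof.
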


\begin{proof}
Denote by $\rad(\Ln)$ the solvable radical of $\Ln$.
Consider the Levi decomposition $\Ln=\Ls\ltimes \rad(\Ln)$ with a semisimple subalgebra $\Ls\cong \Ln/\rad (\Ln)$.
We have $\Ls\neq 0$ by Theorem $4.2$ of \cite{BU44}, which is a non-trivial result. For any derivation
$D\in \Der(\Ln)$ we have $D(\rad(\Ln))\subseteq \rad(\Ln)$. Hence there is a natural
projection of Lie algebras $p\colon \Der(\Ln)\ra \Der(\Ls)\cong \Ls$. Consider the composition
\[
p\circ L\colon \Lg \ra \Der(\Ls)\cong \Ls,
\]
where $L\colon \Lg\ra \Der(\Ln)$ is the representation given by the left multiplication maps $L(x)$. 
Either $p\circ L$ is injective or it is the zero map, because its kernel is an ideal in $\Lg$. 
Suppose that $p\circ L$ is injective. Then $\dim (\Lg)\le \dim (\Ls)\le \dim(\Ln)$, hence 
$\dim(\Ls)=\dim(\Ln)$. It follows that $\Ln=\Ls$, and $p\circ L\colon \Lg\ra\Ln$ is an isomorphism. 
So we are done. Otherwise we have $p\circ L=0$, so that $0=p(L(x)(y))=p(x\cdot y)$ 
for all $x,y\in \Lg$. This implies $x\cdot y\in \rad (\Ln)$ for all $x,y\in \Lg$. But then
\[
[x,y]=x\cdot y-y\cdot x-\{x,y\}
\]
implies that $\rad(\Ln)$ is also an ideal in $\Lg$. Hence either $\rad(\Ln)=0$ and $\Ln=\Ls$ as before,
or $\rad(\Ln)=\Lg$, which means $\Ls=0$, a contradiction.
\end{proof}

In other words, if $(\Lg,\Ln)$ is a pair of Lie algebras, such that $\Lg$ is simple, and $\Ln$ is a Lie algebra
which is not isomorphic to $\Lg$, then there is no post-Lie algebra structure on $(\Lg,\Ln)$.  \\[0.2cm]
{\it Case 2:} Assume, that $\Lg$ is a semisimple Lie algebra. Then we have the following result, see 
Theorem $4.2$ of \cite{BU44}:

\begin{thm}
Let $(\Lg,\Ln)$ be a pair of Lie algebras, where $\Lg$ is semisimple and $\Ln$ is solvable.
Then there is no post-Lie algebra structure on $(\Lg,\Ln)$.
\end{thm}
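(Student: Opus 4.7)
My plan is to exploit the Lie algebra representation $L\colon\Lg\to\Der(\Ln)$ afforded by axiom \eqref{post2} together with an auxiliary representation $M=L+\ad_\Ln$ built from the post-Lie data, and to derive a contradiction by combining the complete reducibility of semisimple $\Lg$-representations with the solvability of $\Ln$. First I rule out the trivial case $L\equiv0$: axiom \eqref{post1} then forces $[x,y]=\{x,y\}$, so $\Lg\cong\Ln$, contradicting that $\Lg$ is semisimple while $\Ln$ is solvable. Hence $L(\Lg)$ is a nonzero semisimple subalgebra of $\Der(\Ln)$, and by Weyl's theorem $V$ decomposes as a direct sum of irreducible $\Lg$-modules under $L$.

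Next I introduce $M\colon\Lg\to\End(V)$ by $M(x):=L(x)+\ad_\Ln(x)$. Since each $L(x)$ is a derivation of $\{\, ,\}$ by \eqref{post3}, one has $[L(x),\ad_\Ln(y)]=\ad_\Ln(L(x)y)$, and combining with \eqref{post1} yields
\[
[M(x),M(y)]=L([x,y])+\ad_\Ln\bigl(L(x)y-L(y)x+\{x,y\}\bigr)=L([x,y])+\ad_\Ln([x,y])=M([x,y]),
\]
so $M$ is also a representation of $\Lg$ on $V$. Equivalently, by \eqref{post1}, $M(x)=R(x)+\ad_\Lg(x)$, so the two representations $L$ and $M$ interlock the three operators $L$, $R$, $\ad_\Lg$, $\ad_\Ln$ in a useful way.

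Since $\Lg=[\Lg,\Lg]$, both $\tr L$ and $\tr M$ vanish identically on $\Lg$, giving $\tr\ad_\Ln(x)=0$ for every $x\in V=\Ln$; in particular $\Ln$ must be unimodular. Pushing further, the invariant symmetric bilinear forms $B_L(x,y)=\tr(L(x)L(y))$ and $B_M(x,y)=\tr(M(x)M(y))$ on $\Lg$ are each proportional to the Killing form on every simple summand of $\Lg$ by Schur's lemma. Comparing them together with the higher-order invariants coming from the two $\Lg$-module structures on $V$ controls the mixed traces of $L(\Lg)$ and $\ad_\Ln(\Ln)$, and the aim is to force every $\ad_\Ln(x)$ to be nilpotent. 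By Engel's theorem, $\Ln$ itself is then nilpotent.

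Finally, once $\Ln$ is nilpotent, I would descend along the derived series; each $\Ln^{(i)}$ is a characteristic ideal of $\Ln$, hence $L(\Lg)$-invariant, and Weyl furnishes an $\Lg$-invariant complement at each stage. Collapsing onto the abelian quotient, the post-Lie axioms reduce to the pre-Lie (left-symmetric) axioms on $\Lg$, which is ruled out on any semisimple Lie algebra by Segal's theorem \cite{SEG}. The main obstacle I anticipate is the intermediate step propagating the trace-zero condition into full nilpotency of $\ad_\Ln$: the mixed trace identities on $V$ involving $L(\Lg)$ and $\ad_\Ln(\Ln)$ require delicate invariant-theoretic bookkeeping, and this is the technically hardest part of the argument.
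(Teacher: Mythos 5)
First, note that the paper does not prove this statement itself; it is quoted verbatim from Theorem 4.2 of \cite{BU44}, so there is no internal proof to compare against. Judged on its own, your proposal contains correct and genuinely useful observations up to a point: ruling out $L\equiv 0$ is fine, the verification that $M(x)=L(x)+\ad_{\Ln}(x)$ is a second representation of $\Lg$ on $V$ is correct (the identity $[L(x),\ad_{\Ln}(y)]=\ad_{\Ln}(L(x)y)$ follows from \eqref{post3}, and the rest from \eqref{post1} and \eqref{post2}), and since $\Lg$ is perfect both $L$ and $M$ take traceless values, so $\Ln$ is unimodular. But there the proof stops. The step you yourself flag as ``the technically hardest part'' --- passing from the vanishing of $\tr\ad_{\Ln}$ to nilpotency of every $\ad_{\Ln}(x)$ --- is not an argument but a hope: unimodularity is very far from nilpotency (the solvable Lie algebra with $[e_1,e_2]=e_2$, $[e_1,e_3]=-e_3$ is unimodular and not nilpotent), and no concrete trace identity mixing $L(\Lg)$ and $\ad_{\Ln}(\Ln)$ is exhibited that would close this gap. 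Moreover, the non-nilpotent case is exactly what is already covered, for every perfect $\Lg$, by Proposition 4.4 of \cite{BU41} (recalled in Section 3 of the paper); the real content of the theorem, where semisimplicity must be used beyond perfectness, is the case $\Ln$ nilpotent --- precisely the case the paper's Question 3.6 records as open for general perfect $\Lg$.

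Your treatment of that remaining nilpotent case has a second, independent gap. To ``collapse onto the abelian quotient'' you need the post-Lie product to descend to $V/[\Ln,\Ln]$. The subspace $[\Ln,\Ln]$ is characteristic in $\Ln$, hence stable under every $L(x)$, but that is not enough: you also need $[\Ln,\Ln]\cdot V\subseteq[\Ln,\Ln]$, equivalently (via \eqref{post1}) that $[\Ln,\Ln]$ be an ideal of $\Lg$. Nothing in the axioms forces this, since $R(w)=L(w)-\ad_{\Lg}(w)+\ad_{\Ln}(w)$ involves $\ad_{\Lg}(w)$, about which you have no control; Weyl's theorem gives $L(\Lg)$-invariant complements but says nothing about $R$. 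So the final reduction to a pre-Lie structure on a semisimple quotient, and hence the appeal to \cite{SEG}, is not available as stated. In short: the skeleton (two interlocking representations, trace arguments, reduction to the nilpotent and then abelian case) is a reasonable plan, but the two load-bearing steps --- unimodularity $\Rightarrow$ nilpotency of $\Ln$, and descent of the product to $\Ln/[\Ln,\Ln]$ --- are both missing and neither follows by the methods you indicate.
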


\noindent {\it Case 3:} Assume, that $\Lg$ is a perfect Lie algebra, i.e., $[\Lg,\Lg]=\Lg$.

\begin{prop}
Let $(\Lg,\Ln)$ be a pair of Lie algebras over a field of characteristic zero, where $\Lg$ is perfect 
and $\Ln$ is abelian or $2$-step nilpotent. Then there is no post-Lie algebra structure on $(\Lg,\Ln)$.
\end{prop}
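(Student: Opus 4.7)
The plan is to deform the post-Lie product $\cdot$ by a half of the $\Ln$-bracket so as to produce a left-symmetric (pre-Lie) product on $\Lg$, and then to invoke the theorem of Segal \cite{SEG}, which asserts that a perfect Lie algebra of characteristic zero admits no nontrivial pre-Lie structure.

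Concretely, I would define
\[
x*y := x\cdot y + \tfrac{1}{2}\{x,y\},
\]
which is well-defined since $\mathrm{char}(k)=0$. Axiom \eqref{post1} together with the antisymmetry of $\{\,,\,\}$ immediately gives
\[
x*y - y*x \;=\; (x\cdot y - y\cdot x) + \{x,y\} \;=\; [x,y],
\]
so the commutator bracket of $*$ recovers the Lie bracket of $\Lg$.

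The central computation is to check that $*$ is left-symmetric. Expanding $(x*y)*z$ and $x*(y*z)$ produces, among other contributions, the iterated-bracket terms $\tfrac{1}{4}\{\{x,y\},z\}$ and $-\tfrac{1}{4}\{x,\{y,z\}\}$, which vanish precisely under the hypothesis that $\Ln$ is abelian or $2$-step nilpotent (so that $\{\Ln,[\Ln,\Ln]\}=0$). Using \eqref{post3} to rewrite $x\cdot\{y,z\}$ as $\{x\cdot y,z\}+\{y,x\cdot z\}$, the remaining $\{\,,\,\}$-valued terms cancel in pairs when one antisymmetrizes the associator in $(x,y)$, leaving
\[
\alpha_*(x,y,z) - \alpha_*(y,x,z) \;=\; \bigl(x\cdot y - y\cdot x + \{x,y\}\bigr)\cdot z \;-\; \bigl(x\cdot(y\cdot z)-y\cdot(x\cdot z)\bigr).
\]
By \eqref{post1} the first summand equals $[x,y]\cdot z$, and by \eqref{post2} this in turn equals the second summand, so the difference vanishes. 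Hence $*$ is a pre-Lie product on $\Lg$.

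Since $\Lg$ is perfect, Segal's theorem forces $*\equiv 0$, i.e.\ $x\cdot y = -\tfrac{1}{2}\{x,y\}$ for all $x,y$. Feeding this back into \eqref{post1} yields $-\{x,y\} = [x,y] - \{x,y\}$, whence $[x,y]=0$ throughout $\Lg$, contradicting $[\Lg,\Lg]=\Lg\neq 0$. The only delicate point I anticipate is the associator bookkeeping: one must carry the coefficients $\tfrac{1}{2}$ and $\tfrac{1}{4}$ with care so that exactly the doubly-bracketed terms (which need $2$-step nilpotency to disappear) are isolated, and the surviving combination is manifestly controlled by \eqref{post2}. The abelian subcase is automatically handled because then $\{\,,\,\}=0$ and $*$ coincides with $\cdot$, so the argument collapses to a direct application of Segal.
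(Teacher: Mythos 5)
Your argument is correct and is essentially the paper's: the paper likewise reduces both cases to a compatible left-symmetric structure on the perfect Lie algebra $\Lg$ and then invokes Segal's theorem, merely citing Proposition $4.2$ of \cite{BU41} for the $\tfrac{1}{2}\{\,,\,\}$-deformation that you carry out explicitly (and which does work — after using \eqref{post1}--\eqref{post3} and the Jacobi identity for $\{\,,\,\}$, the only obstruction to left-symmetry of $*$ is a multiple of $\{\{x,y\},z\}$, which vanishes exactly when $\Ln$ is at most $2$-step nilpotent). One small point of precision: Segal's result is that a Lie algebra carrying such a (compatible, hence here complete) left-symmetric structure must be solvable, i.e.\ a nonzero perfect Lie algebra admits no such structure at all, so the mere existence of $*$ already gives the contradiction; the detour through ``$*\equiv 0$, hence $[x,y]=0$'' is harmless but unnecessary, since the zero product can only be a pre-Lie structure on an abelian Lie algebra.
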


\begin{proof}
Assume that there exists a post-Lie algebra structure on $(\Lg,\Ln)$ with abelian $\Ln$. 
By definition this is a compatible left-symmetric structure on the perfect Lie algebra $\Lg$. 
However, it has been shown in \cite{SEG}, Theorem $1$ and Proposition $1$, that any Lie algebra 
admitting such a structure must be solvable. This is a contradiction.
For the field of complex numbers, the solvability of $\Lg$ has been already shown in \cite{HEL}, 
Proposition $20$. \\
Now assume that $\Ln$ is $2$-step nilpotent. By Proposition $4.2$ of \cite{BU41} it follows that
$\Lg$ admits a left-symmetric structure. This is impossible, as we have just seen.
\end{proof}

We recall Proposition $4.4$ of \cite{BU41}:

\begin{prop}
Let $(\Lg,\Ln)$ be a pair of Lie algebras, where $\Lg$ is perfect and $\Ln$ is solvable and non-nilpotent.
Then there is no post-Lie algebra structure on $(\Lg,\Ln)$.
\end{prop}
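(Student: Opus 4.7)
My plan is to derive the proposition from a single auxiliary fact, which I call the \emph{nilradical lemma}: for any solvable Lie algebra $\La$ over a field of characteristic zero, every derivation $D \in \Der(\La)$ satisfies $D(\La) \subseteq \nil(\La)$.

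Granting this lemma, the argument is short. By axiom~\eqref{post3}, each $L(x)$ is a derivation of $\Ln$, so $x \cdot y = L(x)(y) \in \nil(\Ln)$ for all $x, y \in V$. Moreover, since $\Ln$ is solvable in characteristic zero, $[\Ln, \Ln]$ is nilpotent, hence $\{x, y\} \in [\Ln, \Ln] \subseteq \nil(\Ln)$. Rearranging axiom~\eqref{post1} as $[x, y] = x \cdot y - y \cdot x + \{x, y\}$ now places $[\Lg, \Lg]$ inside $\nil(\Ln)$. Since $\Lg$ is perfect,
\[
V = \Lg = [\Lg, \Lg] \subseteq \nil(\Ln) \subseteq V,
\]
so $\Ln = \nil(\Ln)$ is nilpotent, contradicting the hypothesis.

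The only real content is the nilradical lemma, which I would prove by the standard adjunction trick. Given $D \in \Der(\Ln)$, form the one-dimensional extension $\Lh := \Ln \oplus kt$, where the new generator $t$ acts on $\Ln$ via $[t, x] = D(x)$ (and $[t,t]=0$); this is a Lie bracket on $\Lh$ precisely because $D$ is a derivation. Since $\Lh / \Ln \cong k$ is abelian and $\Ln$ is solvable, $\Lh$ is itself solvable, so $[\Lh, \Lh]$ is a nilpotent ideal of $\Lh$ (a classical consequence of Lie's theorem in characteristic zero). Thus $D(\Ln) = [t, \Ln] \subseteq [\Lh, \Lh] \subseteq \nil(\Lh)$, and intersecting with $\Ln$ gives $D(\Ln) \subseteq \nil(\Lh) \cap \Ln$, a nilpotent ideal of $\Ln$, hence contained in $\nil(\Ln)$. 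The plan has no serious obstacle; the only point deserving a brief word is that the nilpotency of $[\Lh,\Lh]$ for solvable $\Lh$, usually stated over an algebraically closed field, extends to any characteristic zero field via base change.
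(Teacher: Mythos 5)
Your proof is correct. Note that the paper gives no proof of this proposition at all --- it is recalled verbatim from Proposition 4.4 of \cite{BU41} --- and your argument (every derivation of a solvable Lie algebra in characteristic zero maps it into the nilradical, so $x\cdot y$, $y\cdot x$ and $\{x,y\}$ all lie in $\nil(\Ln)$, whence perfectness of $\Lg$ forces $V=[\Lg,\Lg]\subseteq\nil(\Ln)$ and $\Ln$ nilpotent) is exactly the standard route taken there; both the nilradical lemma and your adjunction-trick proof of it are sound, with the characteristic-zero hypothesis correctly identified as the only place it is needed.
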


This naturally leads to the following question:

\begin{qu}
Let $(\Lg,\Ln)$ be a pair of Lie algebras, where $\Lg$ is perfect and $\Ln$ is nilpotent.
Does there exist a  post-Lie algebra structure on $(\Lg,\Ln)$ ?
\end{qu}

We obtain examples of perfect non-semisimple Lie algebras as follows. Let $\Ls$ be a semisimple Lie algebra
with irreducible representation $V(m)$ of dimension $m\ge 2$ and form $\Lg=\Ls\times V(m)$
with Lie brackets
\[
[(x,v),(y,w)]=([x,y],x.w-y.v)
\]
for all $x,y\in \Ls$ and $v,w\in V(m)$. Then $\Lg$ is a perfect Lie algebra with radical
$\rad(\Lg)=V(m)$ and $\Lg/\rad(\Lg)\cong \Ls$. We will write $\Lg=\Ls\ltimes V(m)$. 
The first interesting example is $\Lg=\Ls\Ll_2(\C)\ltimes V(2)=
\langle e_1,e_2,e_3\rangle \ltimes \langle e_4,e_5\rangle$ with Lie brackets
\begin{align*}
[e_1,e_2] & = e_3,\;[e_1,e_3] = -2e_1,\, [e_1,e_5] = e_4, \\
[e_2,e_3] & = 2e_2,\;[e_2,e_4 ]= e_5, \\
[e_3,e_4 ] & = e_4,\;[e_3,e_5]= -e_5.
\end{align*}

We have the following result.

\begin{prop}
Let $(\Lg,\Ln)$ be a pair of Lie algebras, where $\Lg=\Ls\Ll_2(\C)\ltimes V(2)$ and $\Ln$ is nilpotent.
Then there is no post-Lie algebra structure on $(\Lg,\Ln)$.
\end{prop}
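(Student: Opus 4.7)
The plan is to analyse the representation $L\colon \Lg\to\Der(\Ln)$, $x\mapsto L(x)$, which is a Lie algebra homomorphism by axiom \eqref{post2}, and to exploit the very restrictive ideal structure of $\Lg=\Ls\Ll_2(\C)\ltimes V(2)$. A straightforward verification (using the simplicity of $\Ls\Ll_2(\C)$, the irreducibility of the module $V(2)$, and Whitehead's vanishing of $H^1(\Ls\Ll_2(\C),V(2))$ to rule out the would-be graph ideals) shows that the only ideals of $\Lg$ are $0$, $V(2)$ and $\Lg$, so $\ker L$ must be one of these three.

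If $\ker L=\Lg$ then all products $x\cdot y$ vanish and axiom \eqref{post1} yields $\{x,y\}=[x,y]$ for all $x,y$, so $\Ln\cong\Lg$ as Lie algebras; this contradicts the hypothesis that $\Ln$ is nilpotent. In the two remaining cases the restriction $L|_{\Ls\Ll_2(\C)}$ has trivial kernel (since $\Ls\Ll_2(\C)$ is simple), so $\Ls\Ll_2(\C)$ embeds into $\Der(\Ln)$. This equips the underlying vector space $V$ of $\Ln$ with a faithful $5$-dimensional $\Ls\Ll_2(\C)$-module structure, and by axiom \eqref{post3} the bracket $\{\,,\,\}$ is $\Ls\Ll_2(\C)$-equivariant.

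The possible decompositions of $V$ into irreducibles are precisely
\[
V(5),\ V(4)\oplus V(1),\ V(3)\oplus V(2),\ V(3)\oplus V(1)^{\oplus 2},\ V(2)^{\oplus 2}\oplus V(1),\ V(2)\oplus V(1)^{\oplus 3}
\]
(the purely trivial module is excluded by faithfulness). For each of these, Clebsch-Gordan parameterises the $\Ls\Ll_2(\C)$-equivariant antisymmetric brackets $\wedge^{2}V\to V$ by a finite set of scalars, and the nilpotency of every $\ad_{\Ln}(x)$ (Engel's theorem), together with the Jacobi identity, constrains these parameters drastically. In the simpler decompositions such as $V(5)$, $V(3)\oplus V(2)$ and $V(3)\oplus V(1)^{\oplus 2}$, nilpotency alone forces $\{\,,\,\}=0$ (for example, in $V(3)\oplus V(2)$ the only equivariant bracket on $V(3)$ is a scalar multiple of the Lie bracket of $\Ls\Ll_2(\C)$ and the only equivariant map $V(3)\otimes V(2)\to V(2)$ is a scalar multiple of the defining action, neither of which is nilpotent unless the scalar vanishes). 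In the remaining decompositions one brings in the further content of axiom \eqref{post1}, which compares the two a priori distinct $\Ls\Ll_2(\C)$-module structures on $V$ given by $\ad_{\Lg}|_{\Ls\Ll_2(\C)}$ (isomorphic to $V(3)\oplus V(2)$) and by $L|_{\Ls\Ll_2(\C)}$. The conclusion in every case is that $\Ln$ must be either abelian or two-step nilpotent; the earlier proposition covering these nilpotency classes under the hypothesis that $\Lg$ is perfect then delivers the desired contradiction.

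The hard part is the case analysis for the decomposition $V(2)^{\oplus 2}\oplus V(1)$, where the $\Ls\Ll_2(\C)$-equivariant bracket carries enough free parameters (a $2\times 2$ matrix describing $\ad_{\Ln}(z)$ for $z$ spanning the trivial summand, together with the three scalars coming from $\wedge^{2}V(2)^{\oplus 2}\to V(1)$) to admit a priori $3$-step nilpotent structures compatible with Jacobi and Engel. Ruling these out requires combining Jacobi, the nilpotency of the matrix giving $L(z)$, and the precise form of \eqref{post1}, and is the step where the explicit structure of $\Lg=\Ls\Ll_2(\C)\ltimes V(2)$ (as opposed to just that $\Lg$ is perfect) is really used.
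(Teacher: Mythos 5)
Your reduction is sound and in fact takes a genuinely different route from the paper: you decompose the underlying space $V$ as an $\Ls\Ll_2(\C)$-module via the faithful representation $L|_{\Ls\Ll_2(\C)}$ and run through the six possible decompositions, whereas the paper first disposes of the abelian, $2$-step and $4$-step nilpotent cases (the last via solvability of $\Der(\Ln)$, using the classification of $5$-dimensional nilpotent Lie algebras) and is thereby reduced to a single algebra $\Ln$, the free $3$-step nilpotent Lie algebra on two generators. The two reductions converge: that algebra is exactly the $3$-step nilpotent structure living inside your decomposition $V(2)^{\oplus 2}\oplus V(1)$, with $\{w_1\otimes a,\,w_2\otimes b\}=\omega(w_1,w_2)Q(a,b)z$ for a rank-one symmetric form $Q$ on the multiplicity space and $\ad(z)$ acting by a nonzero nilpotent matrix on that space. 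Your preliminary cases ($V(5)$, $V(4)\oplus V(1)$, $V(3)\oplus V(2)$, $V(3)\oplus V(1)^{\oplus 2}$, $V(2)\oplus V(1)^{\oplus 3}$) do check out: Clebsch--Gordan plus nilpotency of the adjoint maps, plus a short Jacobi computation in the last case, force $\Ln$ to be abelian or $2$-step nilpotent, and the earlier propositions for perfect $\Lg$ then apply.

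The genuine gap is that the decisive case is not actually ruled out. For $V(2)^{\oplus 2}\oplus V(1)$ you correctly observe that Jacobi and Engel leave room for $3$-step nilpotent brackets, and you then state that eliminating them ``requires combining Jacobi, the nilpotency of the matrix giving $L(z)$, and the precise form of \eqref{post1}'' --- but no such argument is given, and this is precisely the hard part of the theorem. The paper spends essentially its entire proof on this one algebra: it writes down $\Der(\Ln)$ explicitly, projects the embedding $x\mapsto(x,L(x))$ of $\Lg$ into $\Ln\rtimes\Der(\Ln)$ modulo the terms $\Ln^1$ and $\Ln^2$ of the lower central series, shows the images of $V(2)$ would have to be abelian yet contain non-commuting matrices in one quotient, and in another quotient forces a $1$-dimensional ideal of $\Lg$, contradicting irreducibility of $V(2)$. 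None of this is routine, and nothing in your outline substitutes for it; axiom \eqref{post1} enters in a nontrivial way (it is what ties the $\Ls\Ll_2(\C)$-module structure coming from $L$ to the one coming from $\ad_{\Lg}$, and what forces the two $V(2)$-copies to interact). As written, the proposal is a plausible plan whose crucial step is announced rather than proved.
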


\begin{proof}
The result is clear for $\Ln$ being abelian or $2$-step nilpotent by the above results. Note that $\dim(\Ln)=5$, because it has the same underlying vector space as $\Lg$.
If $\Ln$ is $4$-step nilpotent, then the Lie algebra $\Der(\Ln)$ is solvable. 
Assume that  $(\Lg,\Ln)$ admits a post-Lie algebra structure in this case. Then by Proposition 
$2.11$ of \cite{BU41} $\Lg$ is a subalgebra of the solvable Lie algebra $\Ln\ltimes \Der(\Ln)$. 
It follows that $\Lg$ is solvable, which is a contradiction. Up to isomorphism there is only
one nilpotent Lie algebra of dimension $5$ for which its derivation algebra is not solvable, namely
the free $3$-step nilpotent Lie algebra on $2$ generators.
Hence it remains to consider this case, and we may assume that
$\Ln$ is isomorphic to this nilpotent Lie algebra of dimension $5$, given by the brackets
\[
\{e_1,e_2\}=e_3,\; \{e_1,e_3\}=e_4,\; \{e_2,e_3\}=e_5.
\]
Denote the lower central series of $\Ln$ by $\Ln^0=\Ln$, $\Ln^i=[\Ln,\Ln^{i-1}]$ for $i\ge 1$.
Then $\Ln^1\cong \C^3$,  $\Ln^2\cong \C^2$, and $\Ln^3=0$. Furthermore, $\Ln/\Ln^1\cong \C^2$ and
$\Ln/\Ln^2\cong \Lh_1$, the $3$-dimensional Heisenberg Lie algebra.
With respect to the above basis of $\Ln$, an arbitrary derivation $D\in\Der(\Ln)$ 
is given by
\[
D=\begin{pmatrix}
\al_1 & \al_2 & 0 & 0 & 0 \\
\be_1 & \be_2 & 0 & 0 & 0 \\
\ga_1 & \ga_2 & \al_1+\be_2 & 0 & 0 \\
\de_1 & \de_2 & \ga_2 & 2\al_1+\be_2 & \al_2 \\
\ep_1 & \ep_2 & -\ga_1 & \be_1 & \al_1+2\be_2 \\
\end{pmatrix}
\]
Let $\Ld_1=\left\{\begin{pmatrix} \al_1 & \al_2 \\ \be_1 & \be_2 \end{pmatrix}\mid \al_i,\be_i \in \C\right\}$ and 
$\Ld_2=\left\{\begin{pmatrix} \al_1 & \al_2 & 0\\ \be_1 & \be_2 & 0 \\ \ga_1 & \ga_2 & \al_1+\be_2 \end{pmatrix}\mid
\al_i,\be_i,\ga_i\in\C\right\}$.
There are natural projections of $\Der(\Ln)$ to $\Ld_1$ and $\Ld_2$. This induces
epimorphisms
\[
(\Ln\rtimes \Der(\Ln))/\Ln^1 \twoheadrightarrow (\Ln/\Ln^1)\rtimes \Ld_1 \text{ and }
(\Ln\rtimes \Der(\Ln))/\Ln^2 \twoheadrightarrow (\Ln/\Ln^2)\rtimes \Ld_2.
\]
Suppose that there exists a post-Lie algebra structure on $(\Lg,\Ln)$. Then there is a monomorphism
$\rho \colon \Lg\hookrightarrow \Ln\rtimes \Der(\Ln)$ given by $x\mapsto (x,L(x))$. It induces morphisms
$\rho_i\colon \Lg\ra \Ld_i$ for $i=1,2$. 
We claim that $\im (\rho_i)\cong\Ls\Ll_2(\C)$, and hence $\ker(\rho_i)\cong\C^2$ for $i=1,2$. 
This is easy to see for $i=1$.  Indeed, the only non-trivial ideal of $\Lg$ is $V(2)\cong \C^2$, so if the image of $\rho_1$ would not be isormorphic to $\Ls\Ll_2(\C)$ it would either be 0 or isomorphic to $\Lg$. This last case is not possible because the dimension of $\Lg$ is too big, while the first case case would imply that the image of $\rho$ would be solvable, which is also not possible.

Consider now $\im (\rho_2)\subseteq \Ld_2$. Suppose that the image
is not isomorphic to $\Ls\Ll_2(\C)$. Then 
\[
\rho_2(V(2))=\left\{ \begin{pmatrix} 0 & 0 & 0 \\ 0 & 0 & 0 \\ \ga_1 & \ga_2 & 0 \end{pmatrix} \mid \ga_i\in \C\right\}
\subseteq \im (\rho_2).
\]
But then we have a morphism $\phi\colon \Lg\ra \Der(\Ln)$ such that $\phi(V(2))$ contains matrices $A$ and $B$
of the form 
\[
A= \begin{pmatrix} 
0 & 0 & 0 & 0 & 0 \\
0 & 0 & 0 & 0 & 0 \\
1 & 0  & 0 & 0 & 0 \\
\ast & \ast & 0 & 0 & 0 \\
\ast & \ast & -1 & 0 & 0 \\
\end{pmatrix} \mbox{ and } B= \begin{pmatrix} 
0 & 0 & 0 & 0 & 0 \\
0 & 0 & 0 & 0 & 0 \\
0 & 1  & 0 & 0 & 0 \\
\ast & \ast & 1 & 0 & 0 \\
\ast & \ast & 0 & 0 & 0 \\
\end{pmatrix}.
\]
The matrices $A$ and $B$ do not commute, but on the other hand, $\phi(V(2))$ must be an abelian Lie algebra, 
so we have a contradiction. \\
Now consider the composition 
\[
\rho_3\colon \Lg \ra (\Ln\rtimes \Der(\Ln))/\Ln^2 \twoheadrightarrow (\Ln/\Ln^2)\rtimes \Ld_2.
\]
We claim that $\rho_3$ must be faithful. Otherwise $\rho_3(\C^2)=0$, and the morphism 
$\Ln/\Ln^2\rtimes \Der(\Ln)\ra \Ln/\Ln^2$ would have a $3$-dimensional image. This would imply
that $\rho_3$ induces a post-Lie algebra structure on the pair $(\Ls\Ll_2(\C),\Ln/\Ln^2)$, which is
impossible by Theorem $\ref{3.1}$. \\
Because $\rho_3$ is faithful, $\rho_3(\C^2)$ is a $2$-dimensional abelian subalgebra of the Heisenberg
algebra $\Ln/\Ln^2$. Hence  $\rho_3(\C^2)$ contains the center of $\Ln/\Ln^2$, which is $\Ln^1/\Ln^2$.
But then the composition 
\[
\rho_4\colon \Lg\ra (\Ln\rtimes \Der(\Ln))/\Ln^1 \twoheadrightarrow (\Ln/\Ln^1)\rtimes \Ld_1
\]
has a $4$-dimensional image, so that $\Lg$ has a $1$-dimensional ideal. However, this is impossible, 
because $V(2)$ is a $2$-dimensional irreducible $\Ls\Ll_2(\C)$-module. This gives a contradiction.
\end{proof}

\begin{rem}
The above result cannot be easily generalized to the case of $\Ls\Ll_n(\C)\ltimes V(m)$, since the 
classification of all nilpotent Lie algebras of dimension $n^2-1+m$ with solvable automorphism group
was used in the proof, for $n=2$, $m=2$. The same problem had already appeared in connection with the 
generalized Auslander conjecture, i.e., in the proof of Theorem $4.1$ in \cite{BU19}.
\end{rem}

\section{Existence results with respect to $\Ln$}

If we assume that $\Ln$ is simple, no such strong result holds as in the case where $\Lg$ is simple.
In fact, given a semisimple Lie algebra $\Ln$, 
there exists always a solvable, non-nilpotent Lie algebra $\Lg$ such that there is a post-Lie algebra structure 
on $(\Lg,\Ln)$, see Proposition $3.1$ in \cite{BU44}. Hence $\Ln$ and $\Lg$ need not be isomorphic in general.
On the other hand, if $\Ln$ is simple and $\Lg$ is nilpotent, then there is no post-Lie algebra structure
on $(\Lg,\Ln)$. This follows from Theorem $3.3$ of \cite{BU44}, which we recall here for convenience:

\begin{thm}
Let $\Ln$ be a semisimple Lie algebra, and $\Lg$ be a solvable, unimodular Lie algebra. Then there is no 
post-Lie algebra structure on $(\Lg,\Ln)$.
\end{thm}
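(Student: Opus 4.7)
The plan is to assume a post-Lie algebra structure exists on $(\Lg,\Ln)$ and derive a contradiction from the interplay between the semisimplicity of $\Ln$ and a trace identity that encodes the unimodularity of $\Lg$. Since $\Ln$ is semisimple it is centerless and satisfies $\Der(\Ln)=\ad(\Ln)$, so by the discussion preceding Proposition \ref{2.2} the product has the form $x\cdot y=\{\phi(x),y\}$ for a unique $\phi\in\End(V)$, and Proposition \ref{2.2} further tells us that $\phi\colon\Lg\to\Ln$ is a Lie algebra homomorphism. In particular $\phi(\Lg)$ is a solvable subalgebra of the semisimple $\Ln$.

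The first main step is a trace identity. Reading axiom \eqref{post1} as an operator identity in $y$ for fixed $x$ and taking traces on $V$, the terms $\tr(\ad_\Ln(\phi(x)))$ and $\tr(\ad_\Ln(x))$ vanish by semisimplicity of $\Ln$, while $\tr(\ad_\Lg(x))$ vanishes because $\Lg$ is unimodular, leaving
\[
\tr(\ad_\Ln(x)\circ\phi)=0\qquad\text{for every }x\in V.
\]
Combining this with the relation $L(x)\circ\phi=\phi\circ\ad_\Lg(x)$, which is immediate from $\phi$ being a Lie homomorphism together with $L(x)=\ad_\Ln(\phi(x))$, and with axiom \eqref{post1} itself, one iterates to obtain $\tr(\ad_\Ln(x)\circ\phi^{k})=0$ for all $k\ge 0$ and all $x\in V$. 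In parallel I would introduce the companion map $\psi:=\id_V+\phi$, verify using axiom \eqref{post1} and $\phi([x,y])=\{\phi(x),\phi(y)\}$ that $\psi\colon\Lg\to\Ln$ is also a Lie algebra homomorphism, and note that since $\psi-\phi=\id_V$ one has $\phi(\Lg)+\psi(\Lg)=V$, expressing the semisimple $\Ln$ as a vector space sum of the two solvable subalgebras $\phi(\Lg)$ and $\psi(\Lg)$.

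The last and hardest step is to turn these constraints into the conclusion $\phi=0$; once this is done the product is trivial, $[x,y]=\{x,y\}$ for all $x,y$, and hence $\Lg\cong\Ln$, contradicting that $\Lg$ is solvable while $\Ln$ is a non-zero semisimple Lie algebra. The main obstacle is precisely this final step: one must combine the family of trace orthogonalities of $\phi$ to the commutative subalgebra $\C[\phi]\subset\End(V)$ it generates with the fact that $\phi(\Lg)$ lies in a Borel subalgebra of $\Ln$ (by Lie's theorem) to exclude any non-zero configuration. A natural way to organize the argument is through the embedding $\rho\colon\Lg\hookrightarrow\Ln\rtimes\Der(\Ln)\cong\Ln\oplus\Ln$ from Proposition $2.11$ of \cite{BU41}, whose image is the graph $\{(\psi(x),\phi(x)):x\in V\}$, a solvable and unimodular subalgebra of the semisimple Lie algebra $\Ln\oplus\Ln$ whose two projections sum to $V$; the structural restrictions on such graph subalgebras, combined with unimodularity, should then force $\phi=0$.
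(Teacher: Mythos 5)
First, a point of reference: the paper does not prove this statement itself --- it is Theorem $3.3$ of \cite{BU44}, recalled verbatim --- so there is no in-text argument to measure yours against. Your preparatory reductions are all correct and are the standard ones: for semisimple $\Ln$ the product is $x\cdot y=\{\phi(x),y\}$ with $\phi\colon\Lg\ra\Ln$ a Lie algebra homomorphism; $\psi=\id+\phi$ is again a homomorphism; $\phi(\Lg)$ and $\psi(\Lg)$ are solvable subalgebras with $\phi(\Lg)+\psi(\Lg)=\Ln$; the identification of the image of $\rho$ in $\Ln\oplus\Ln$ with the graph $\{(\psi(x),\phi(x))\}$ is right; and taking traces in \eqref{post1}, using $R(x)=-\ad_{\Ln}(x)\circ\phi$, unimodularity of $\Lg$ and perfectness of $\Ln$, does give $\tr(\ad_{\Ln}(x)\circ\phi)=0$ for all $x$.

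The problem is that the decisive step is absent: you write that the accumulated constraints ``should then force $\phi=0$'', but no argument is given, and the constraints you have assembled do not by themselves yield a contradiction. The trace identities $\tr(\ad_{\Ln}(x)\circ\phi^{k})=0$ (even granting the unproved induction on $k$) only say that the powers of $\phi$ are orthogonal, for the trace form on $\End(V)$, to the $n$-dimensional subspace $\ad(\Ln)$ of the $n^{2}$-dimensional space $\End(V)$; this leaves a huge space of nonzero candidates and cannot alone force $\phi=0$. Likewise, being the vector-space sum of two solvable subalgebras is no contradiction for a semisimple Lie algebra: $\Ls\Ll_{2}(\C)$ is the sum of its upper and lower triangular Borel subalgebras, and in fact Proposition $3.1$ of \cite{BU44} produces genuine post-Lie structures with $\Ln$ semisimple and $\Lg$ solvable but non-unimodular, realized by exactly such graph subalgebras. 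So any correct proof must exploit unimodularity at the crucial moment, and your outline's only use of it is the single trace identity, which is far too weak. As it stands, the proposal is a correct reduction together with a statement of what remains to be shown, not a proof.
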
 

One might ask, given a simple Lie algebra $\Ln$, for which Lie algebras $\Lg$ there exists a post-Lie algebra
structure on $(\Lg,\Ln)$. For example, for $\Ln=\Ls\Ll_2(\C)$ there exists a post-Lie algebra structure
on $(\Lg,\Ln)$ exactly for $\Lg=\Ls\Ll_2(\C)$, and for all $3$-dimensional solvable Lie algebras which are 
not unimodular, see Proposition $4.7$ in \cite{BU41}. For $\Ln=\Ls\Ll_3(\C)$ we may even take Lie algebras $\Lg$ 
which are neither solvable nor semisimple:

\begin{ex}
Let $\Ln=\Ls\Ll_3(\C)$, with standard basis $\{e_1,\ldots ,e_8\}$ as given in \cite{BU44}, and $\Lg$ be
the Lie algebra defined by the brackets
\begin{align*}
[e_1,e_4] & = e_2,\;[e_1,e_7]= -2e_1,\;[e_1,e_8]= e_1, \\
[e_2,e_6] & = e_1, \; [e_2,e_7 ]= -e_2,\;[e_2,e_8]= -e_2, \\
[e_4,e_6] & = e_8, \; [e_4,e_7 ] = e_4,\; [e_4,e_8]= -2e_4,\\
[e_6,e_7] & = -e_6, \; [e_6,e_8 ] = 2e_6.
\end{align*}
Then there exists a post-Lie algebra structure on $(\Lg,\Ln)$, given by the following
non-zero products
\begin{align*}
e_3\cdot e_1 & = e_7,\; e_3\cdot e_2= -e_4,\;e_3\cdot e_6= e_5,\;e_3\cdot e_7= -2e_3,\; e_3\cdot e_8= e_3,\\
e_5\cdot e_1 & = -e_6,\; e_5\cdot e_2= e_7+e_8,\;e_5\cdot e_4= e_3,\;e_5\cdot e_7= -e_5,\; e_5\cdot e_8= -e_5.
\end{align*}
\end{ex}
The linear map $\phi$ with $x\cdot y=\{\phi(x),y\}$ is given by 
\[
\phi={\rm diag}(0,0,-1,0,-1,0,0,0).
\]
The construction of this example arises by an easy modification of example $6.3$ in \cite{BU44}.
We have $\Lg=\La+\Lb$ with the abelian subalgebra $\La=\langle e_3,e_5\rangle$ and the subalgebra
$\Lb=\langle e_1,e_2,e_4,e_6,e_7,e_8\rangle$.
The Lie algebra $\Lg$ is not semisimple, because $\Lg^{(1)}=[\Lg,\Lg]\neq \Lg$. We have
$\Lg^{(2)}=[\Lg^{(1)},\Lg^{(1)}]=\Lg^{(1)}$, hence $\Lg$ is not solvable.

\section{Commutative post-Lie algebra structures}

A post-Lie algebra structure on a pair of Lie algebras $(\Lg,\Ln)$ is commutative,
if the algebra product is commutative, i.e., $x\cdot y=y\cdot x$ for all $x,y$. This
means $[x,y]=\{x,y\}$, so that the two Lie algebras are identical. We only write
$\Lg$ instead of the pair $(\Lg,\Lg)$ in this case.

\begin{defi}
A {\it commutative post-Lie algebra structure} on a Lie algebra $\Lg$ 
is a $k$-bilinear product $x\cdot y$ satisfying the identities:
\begin{align}
x\cdot y & =y\cdot x \label{com4}\\
[x,y]\cdot z & = x\cdot (y\cdot z) -y\cdot (x\cdot z)\label{com5} \\
x\cdot [y,z] & = [x\cdot y,z]+[y,x\cdot z] \label{com6}
\end{align}
for all $x,y,z \in V$.
\end{defi}

There is always the {\it trivial} commutative post-Lie algebra structure on $\Lg$, given
by $x\cdot y=0$ for all $x,y\in \Lg$.

\begin{ex}
Let $\Lg$ be an abelian Lie algebra. Then a commutative post-Lie algebra structure on $\Lg$
corresponds to a commutative, associative algebra.
\end{ex}

Using \eqref{com4}, \eqref{com5} and $[x,y]=0$ we have
\[
x\cdot (z\cdot y)=x\cdot(y\cdot z)=y\cdot (x\cdot z) =(x\cdot z)\cdot y
\]
for all $x,y,z\in \Lg$. 

\begin{defi}
Two commutative post-Lie algebra structures $(A,\cdot)$ and $(B,\circ)$ on $\Lg$
are called {\it isomorphic}, if there is a bijective linear map $\phi\colon \Lg\ra \Lg$
satisfying
\begin{align*}
\phi(x\cdot y) & = \phi(x)\circ \phi(y) \\
\phi([x,y]) & = [\phi(x),\phi(y)]
\end{align*} 
for all $x,y \in \Lg$.
\end{defi}

Note that this is in general more than just an isomorphism of the commutative 
algebras $(A,\cdot)$ and $(B,\circ)$.
We require in addition that the algebra isomorphism is also an automorphism of the
Lie algebra $\Lg$. \\
We start with an important result on commutative post-Lie algebra structures.

\begin{prop}\label{5.3}
Any commutative post-Lie algebra structure on a semisimple Lie algebra $\Lg$ is trivial. 
\end{prop}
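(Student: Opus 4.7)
The plan is to exploit Proposition 2.2. Since $\Lg$ is semisimple, it is centerless and satisfies $\Der(\Lg)=\ad(\Lg)$, so there exists a unique $\phi\in\End(V)$ with $x\cdot y=[\phi(x),y]$. In the commutative case $\{\,,\,\}=[\,,\,]$, and Proposition 2.2 then reduces the post-Lie axioms to the two conditions that $\phi$ is a Lie algebra endomorphism of $\Lg$ and that
\[ [\phi(x),y]+[x,\phi(y)]=0 \quad\text{for all } x,y\in\Lg. \]
Call this identity $(*)$. Showing that the product is trivial is the same as showing $\phi=0$.

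First I would reduce to the simple case. Decompose $\Lg=\Lg_1\oplus\cdots\oplus\Lg_r$ into simple ideals and apply $(*)$ with $x\in\Lg_i$ and $y\in\Lg_j$ for $i\neq j$. Since $[\Lg_i,\Lg_j]=0$, only the $\Lg_j$-component of $\phi(x)$ contributes to $[\phi(x),y]$ and only the $\Lg_i$-component of $\phi(y)$ contributes to $[x,\phi(y)]$; the two resulting summands lie in different simple ideals, hence each vanishes. This forces the $\Lg_j$-component of $\phi(x)$ into $Z(\Lg_j)=0$, so $\phi(\Lg_i)\subseteq\Lg_i$ for every $i$ and the problem splits over the simple factors.

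Assume then $\Lg$ is simple and, for contradiction, $\phi\neq 0$. A nonzero Lie homomorphism from a simple Lie algebra is injective, hence by dimension an automorphism, so $\phi$ preserves the Killing form $B$. Pairing $(*)$ against an auxiliary $z$ via $B$ and using the invariance $B([a,b],c)=B(a,[b,c])$ together with $B(\phi(a),c)=B(a,\phi^{-1}(c))$, nondegeneracy of $B$ gives
\[ \phi^{-1}[y,z]+[\phi(y),z]=0 \quad\text{for all } y,z\in\Lg. \]
Separately, applying $(*)$ twice and using that $\phi$ is a Lie homomorphism yields $[\phi^2(x),y]=[x,\phi^2(y)]$. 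Combining the two identities one obtains $[y,(\phi^2+\phi^{-1})(z)]=0$ for all $y,z\in\Lg$, so $(\phi^2+\phi^{-1})(\Lg)\subseteq Z(\Lg)=0$, and therefore $\phi^3=-\id$.

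To finish, use once more that $\phi$ is a Lie automorphism: on the one hand $\phi^3([x,y])=[\phi^3(x),\phi^3(y)]=[-x,-y]=[x,y]$, while on the other $\phi^3=-\id$ gives $\phi^3([x,y])=-[x,y]$. Hence $[x,y]=0$ for all $x,y\in\Lg$, contradicting $\Lg$ being simple (and in particular nonabelian). So $\phi=0$ on each simple factor, and the original product is trivial. The main obstacle will be the Killing-form calculation producing $\phi^3=-\id$; the reduction to simple factors and the final bracket contradiction are routine once that identity is in hand.
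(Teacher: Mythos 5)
Your proof is correct, but it takes a genuinely different route from the paper's. The paper's argument is essentially a two-line citation: after writing $x\cdot y=[\phi(x),y]$ via Proposition \ref{2.2}, the symmetry condition $[\phi(x),y]+[x,\phi(y)]=0$ says $\phi\in\CD(0,1,1)$, and the vanishing of that space for semisimple $\Lg$ is quoted from Proposition $5.10$ of \cite{BU44} (or Lemma $6.1$ of \cite{LEL}). That cited lemma is stronger than what you prove, since it holds for an \emph{arbitrary} linear map $\phi$ satisfying the symmetry identity, with no homomorphism assumption. You instead give a self-contained argument that crucially exploits the second condition of Proposition \ref{2.2}, namely that $\phi$ is a Lie algebra endomorphism: the reduction to simple ideals is clean, a nonzero endomorphism of a simple factor is an automorphism and hence preserves the Killing form, and your chain of identities $\phi^{-1}([y,z])+[\phi(y),z]=0$, $[\phi^2(x),y]=[x,\phi^2(y)]$, $[y,(\phi^2+\phi^{-1})(z)]=0$ checks out and yields $\phi^3=-\id$, which contradicts $\phi^3$ being an automorphism of a nonabelian algebra. (In fact you could stop earlier: combining $\phi^{-1}([y,z])=-[\phi(y),z]$ with the antisymmetrized form of $(*)$ already forces $[\phi(y),z]=-[\phi(y),z]$ and hence $\phi=0$ directly, so the $\phi^3=-\id$ detour, while valid, is not needed.) What each approach buys: the paper's is shorter and rests on a reusable general fact about generalized derivation spaces; yours is elementary and avoids the external reference, at the cost of using the homomorphism property that the general lemma does not require.
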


\begin{proof}
Since $\Lg$ is semisimple, we have $L(x)=\ad (\phi(x))$ for some $\phi\in \End(\Lg)$, such 
that $x\cdot y=[\phi(x),y]$ and
\[
[\phi(x),y]=[\phi(y),x]
\]
for all $x,y\in \Lg$, see Proposition $\ref{2.2}$. It follows $\phi\in \CD(0,1,1)=0$ by
Proposition $5.10$ in \cite{BU44}, or Lemma $6.1$ in \cite{LEL}. 
Hence $x\cdot y=0$ for all $x,y\in \Lg$.
\end{proof}

\begin{cor}\label{5.5}
Let $x\cdot y$ be a commutative  post-Lie algebra structure on a Lie algebra $\Lg$.
Then $\Lg\cdot \Lg\subseteq \rad (\Lg)$.
\end{cor}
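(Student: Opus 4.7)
The plan is to reduce to the semisimple quotient $\Lg/\rad(\Lg)$ and apply Proposition~\ref{5.3}.

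First, since we are in the commutative setting we have $\Lg=\Ln$, so identity \eqref{com6} asserts that each left multiplication operator $L(x)$ is a derivation of the Lie algebra $\Lg$; by \eqref{com4} we have $R(x)=L(x)$, so each right multiplication is a derivation as well. It is standard that the solvable radical of a finite-dimensional Lie algebra in characteristic zero is invariant under every derivation, hence $L(x)(\rad(\Lg))\subseteq \rad(\Lg)$ and $R(x)(\rad(\Lg))\subseteq \rad(\Lg)$ for every $x\in\Lg$. Consequently, the bilinear product $x\cdot y$ descends to a well-defined bilinear product $\bar{\cdot}$ on the quotient Lie algebra $\bar{\Lg}:=\Lg/\rad(\Lg)$.

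Second, I would verify that $\bar{\cdot}$ is a commutative post-Lie algebra structure on $\bar{\Lg}$. This is essentially formal: the three identities \eqref{com4}, \eqref{com5}, \eqref{com6} hold in $\Lg$, the Lie bracket on $\bar{\Lg}$ is induced from $[\cdot,\cdot]$, and $\rad(\Lg)$ is an ideal of the post-Lie algebra $(\Lg,\cdot)$ on both sides, so the axioms pass to the quotient. Since $\bar{\Lg}$ is semisimple, Proposition~\ref{5.3} forces $\bar{x}\,\bar{\cdot}\,\bar{y}=0$ for all $x,y\in\Lg$. Unwinding the definition, this means $x\cdot y\in\rad(\Lg)$ for all $x,y\in\Lg$, which is precisely $\Lg\cdot\Lg\subseteq\rad(\Lg)$.

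The only substantive point is the invariance of $\rad(\Lg)$ under all derivations of $\Lg$, which is the standard fact that the radical is a characteristic ideal in characteristic zero; everything else is bookkeeping to check that the product descends and that the three axioms survive passage to the quotient.
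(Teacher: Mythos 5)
Your proposal is correct and follows essentially the same route as the paper's own proof: both use that the left (and hence, by commutativity, right) multiplications are derivations preserving $\rad(\Lg)$, descend the product to the semisimple quotient $\Lg/\rad(\Lg)$, and invoke Proposition~\ref{5.3} to conclude the induced product vanishes. No gaps.
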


\begin{proof} 
Since the left multiplication operators $L(x)$ are derivations of $\Lg$ for all $x\in \Lg$, 
we have $L(x)(\Lr)\subseteq \Lr$, where $\Lr=\rad(\Lg)$ denotes the solvable radical of $\Lg$.
It follows that $\Lg\cdot \Lr \subseteq \Lr$, and of course also $\Lr\cdot \Lg \subseteq \Lr$.
Therefore, there is an induced product $\circ$  on the semisimple Lie algebra $\Ls=\Lg/\Lr$ which is given
by
\[ 
\circ: \Ls \times \Ls \rightarrow \Ls: (x+\Lr, y+\Lr) \mapsto (x+\Lr)\circ (y+\Lr)= x\cdot y +\Lr.
\]
We have $[x,y]_{\Lg}+ \Lr=[x +\Lr,y+ \Lr]_{\Ls}$, and it
is obvious that $\circ$ is a commutative post-Lie algebra structure on $(\Ls,\Ls)$.
By Proposition $\ref{5.3}$, $\circ$ is the trivial zero product.
But this exactly means that $\Lg\cdot \Lg \subseteq \Lr$.
\end{proof}

\begin{cor}
Let $\Lg=\Ls\oplus \Lr$ be a direct Lie algebra sum of a semisimple Lie algebra
$\Ls$ and a solvable Lie algebra $\Lr$. Then for any commutative post Lie structure on $\Lg$ we have
that $\Ls\cdot \Lg=\Lg\cdot \Ls=0$.
\end{cor}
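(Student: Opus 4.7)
The plan is to combine Corollary \ref{5.5} with axiom \eqref{com6}, exploiting that in a direct Lie algebra sum $\Lg=\Ls\oplus\Lr$ one has $[\Ls,\Lr]=0$.

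First I would note that, since any solvable ideal of $\Lg=\Ls\oplus\Lr$ projects to a solvable ideal of the semisimple summand $\Ls$ and therefore lies entirely in $\Lr$, we have $\rad(\Lg)=\Lr$. Hence Corollary \ref{5.5} yields $x\cdot y\in\Lr$ for all $x,y\in\Lg$.

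The key step is then to plug $y,z\in\Ls$ and an arbitrary $x\in\Lg$ into the axiom \eqref{com6}:
\[
x\cdot[y,z]=[x\cdot y,z]+[y,x\cdot z].
\]
By the previous paragraph $x\cdot y,\,x\cdot z\in\Lr$, while $y,z\in\Ls$; since the sum is direct we have $[\Lr,\Ls]=0$, so the right-hand side vanishes. Thus $x\cdot[y,z]=0$ for every $x\in\Lg$ and every $y,z\in\Ls$. Using that $\Ls$ is semisimple, so $\Ls=[\Ls,\Ls]$, this gives $\Lg\cdot\Ls=0$. Commutativity \eqref{com4} then turns this into $\Ls\cdot\Lg=\Lg\cdot\Ls=0$, as required.

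There is no real obstacle here: the argument is a one-line application of \eqref{com6} once Corollary \ref{5.5} has been invoked, the point being that the direct-sum hypothesis is precisely what is needed to annihilate the right-hand side of \eqref{com6} when $y,z\in\Ls$.
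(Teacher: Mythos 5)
Your proof is correct and follows essentially the same route as the paper: both use Corollary \ref{5.5} to place $x\cdot y$ and $x\cdot z$ in $\Lr=\rad(\Lg)$, apply \eqref{com6} with $y,z\in\Ls$ so that the right-hand side lands in $[\Lr,\Ls]+[\Ls,\Lr]=0$, and conclude via $[\Ls,\Ls]=\Ls$. The only difference is that you spell out the identification $\rad(\Lg)=\Lr$ and the final appeal to commutativity, which the paper leaves implicit.
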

\begin{proof}
Let $x\in \Lg$ and $y,z\in \Ls$, then
\[ 
x \cdot [y,z]= [x \cdot y, z] + [y, x \cdot z]\in [\Lr, \Ls] + [\Ls, \Lr] =0.
\]
As $[\Ls,\Ls]=\Ls$, the result follows.
\end{proof}


In the following we want to generalize Proposition $\ref{5.3}$. For this reason we need to study
derivations of semidirect products of Lie algebras. Let $\Ls$ be a semisimple Lie algebra, 
$\Lr$ a solvable Lie algebra, and consider a representation $\phi\colon \Ls\ra \Der(\Lr)$.
Following \cite{MMO} we define for this section the semidirect product $\Lg=\Lr\rtimes \Ls$ with $\phi$ by
\begin{align}
[(a,x),(b,y)]=([a,b]+\phi(x)b-\phi(y)a, [x,y]) \label{c7}
\end{align}
for all $(a,x),(b,y)\in \Lr\rtimes \Ls$. We have $\Lr=\rad(\Lg)$, so that a derivation $D\in \Der(\Lg)$
satisfies $D(\Lr)\subseteq \Lr$. Hence there exist linear maps $d_1\colon \Lr\ra \Lr$, $d_2\colon \Ls\ra \Ls$
and $f\colon \Ls\ra \Lr$ such that
\begin{align}
D(a,x)=(d_1(a)+f(x), d_2(x)).\label{c8}
\end{align}
The fact that $D$ is a derivation of $\Lg$ now imposes certain conditions on $d_1,d_2$ and $f$.
We have the following lemma, which is similar to Proposition $1.5$ in \cite{MMO}, but has slightly different
assumptions.

\begin{lem}\label{5.7}
The linear map $D\colon \Lg\ra \Lg$ given by $(a,x)\mapsto (d_1(a)+f(x), d_2(x))$ is a derivation of $\Lg$
if and only if the following four conditions are satisfied:
\begin{itemize}
\item[$(a)$] $d_1\in \Der(\Lr)$. 
\item[$(b)$] $d_2\in \Der(\Ls)$.
\item[$(c)$] $f([x,y])=\phi(x)f(y)-\phi(y)f(x)$ for all $x,y\in \Ls$, i.e., $f$ is a $1$-cocycle.
\item[$(d)$] $[d_1,\phi(x)]=\ad_{\Lr}(f(x))+\phi(d_2(x))$ for all $x\in \Ls$. 
\end{itemize}
Here $\ad_{\Lr}$ denotes the adjoint representation of $\Lr$.
\end{lem}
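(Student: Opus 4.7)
The plan is to write out the derivation identity
\[
D\bigl([(a,x),(b,y)]\bigr)=\bigl[D(a,x),(b,y)\bigr]+\bigl[(a,x),D(b,y)\bigr]
\]
using the explicit bracket \eqref{c7} and the ansatz \eqref{c8}, then split into the $\Lr$- and $\Ls$-components and specialize to derive conditions (a)--(d) one by one. For the converse, the same calculation, read in reverse, shows that the four conditions together force $D$ to be a derivation.

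More concretely, expanding the left-hand side gives
\[
\bigl(d_1([a,b]+\phi(x)b-\phi(y)a)+f([x,y]),\; d_2([x,y])\bigr),
\]
while the right-hand side becomes
\[
\bigl([d_1(a)+f(x),b]+\phi(d_2(x))b-\phi(y)(d_1(a)+f(x)) + [a,d_1(b)+f(y)]+\phi(x)(d_1(b)+f(y))-\phi(d_2(y))a,\; [d_2(x),y]+[x,d_2(y)]\bigr).
\]
Comparing the $\Ls$-components immediately yields (b). For the $\Lr$-components, I would extract the four conditions by judicious choices of arguments: setting $x=y=0$ reduces the identity to $d_1([a,b])=[d_1(a),b]+[a,d_1(b)]$, which is (a); setting $a=b=0$ leaves $f([x,y])=\phi(x)f(y)-\phi(y)f(x)$, which is (c); and setting $b=0$, $x=0$ (or equivalently $a=0$, $y=0$) isolates the mixed term
\[
d_1(\phi(y)a)-\phi(y)d_1(a)=[f(y),a]+\phi(d_2(y))a,
\]
which is precisely (d) after rewriting the right-hand side via $\ad_{\Lr}(f(y))$.

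For the converse, assume (a)--(d) and plug back into the full $\Lr$-component identity. Using (a) on $[a,b]$, using (d) on both $d_1(\phi(x)b)$ and $d_1(\phi(y)a)$, and using (c) on $f([x,y])$, one sees all non-cancelling terms match on the two sides. The $\Ls$-component identity is (b). Hence $D$ is a derivation.

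I do not expect any genuine obstacle; the content is bookkeeping. The only care needed is to record signs correctly in the mixed terms $\phi(x)b$ and $\phi(y)a$ (they appear with opposite signs in \eqref{c7}), and to recognize the combination $d_1\phi(x)-\phi(x)d_1$ as the commutator $[d_1,\phi(x)]$ when reading off (d). Linearity in each argument is what legitimates the specialization strategy, so that once each of (a)--(d) has been separately verified, the full identity follows by summing the four independent contributions.
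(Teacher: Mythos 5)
Your proposal is correct and follows essentially the same route as the paper: expand the derivation identity using \eqref{c7} and \eqref{c8}, read off (b) and (c) from the $a=b=0$ case, (a) from $x=y=0$, and (d) from the mixed specialization (the paper uses $a=y=0$ where you use $b=0$, $x=0$, but these are symmetric), with the converse following by bilinearity. Your remark that linearity in each argument justifies recovering the full identity from the four specializations is exactly the content behind the paper's ``conversely it is easy to see.''
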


\begin{proof}
The map $D$ is a derivation if and only if
\begin{align}
D([a,b]+\phi(x)b-\phi(y)a,[x,y]) & = [D(a,x),(b,y)]+[(a,x),D(b,y)] \label{c9}
\end{align}
for all $a,b\in \Lr$ and $x,y\in \Ls$. Assume that $D$ is a derivation. Then \eqref{c9} applied to
$a=b=0$ yields
\begin{align*}
(f([x,y]),d_2([x,y]))=(-\phi(y)f(x),[d_2(x),y])+(\phi(x)f(y),[x,d_2(y)]),
\end{align*}
because 
\begin{align*}
D(0,[x,y]) & =(f([x,y]),d_2([x,y])), \\
[D(0,x),(0,y)] & =[(f(x),d_2(x)),(0,y)]=(-\phi(y)f(x),[d_2(x),y]), \\
[(0,x),[D(0,y)] & = [(0,x),(f(y),d_2(y))]=(\phi(x)f(y),[x,d_2(y)]).
\end{align*}
The $\Ls$-component gives $(b)$, and the $\Lr$-component gives $(c)$. In the same way we obtain
$(a)$ from \eqref{c9} with $x=y=0$. Setting $a=y=0$ in \eqref{c9} gives
\[
d_1(\phi(x)b)=[f(x),b]+\phi(d_2(x))b+\phi(x)d_1(b),
\]
which is equivalent to 
\[
(d_1\phi(x)-\phi(x)d_1)(b)=[f(x),b]+\phi(d_2(x))b. 
\]
Since this must hold for all $b\in \Lr$, this implies that
\[
[d_1,\phi(x)]=\ad_{\Lr}(f(x))+\phi(d_2(x)),
\]
which is condition $(d)$. Conversely it is easy to see that the four conditions imply that
$D$ is a derivation.
\end{proof}

We will say that the map $D$ in \eqref{c8} is {\it determined} by the triple $(d_1,f,d_2)$.

\begin{defi}
Denote by
\[
\Der_{\Ls}(\Lr)=\{ d\in \Der(\Lr) \mid \phi(x)d(a)=d(\phi(x)a) \;\forall \, x\in \Ls, a\in \Lr \}
\]
the set of derivations of $\Lr$ which are simultaneously $\Ls$-morphisms.
\end{defi}

\begin{cor}
A triple $(d,0,0)$ determines a derivation of $\Lg$ if and only if $d\in \Der_{\Ls}(\Lr)$.
\end{cor}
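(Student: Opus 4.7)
The plan is to simply apply Lemma \ref{5.7} with $d_1 = d$, $f = 0$, $d_2 = 0$ and check each of the four conditions in turn.

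First I would observe that condition $(b)$, namely $d_2 \in \Der(\Ls)$, holds trivially since $d_2 = 0$. Similarly, condition $(c)$, requiring $f([x,y]) = \phi(x) f(y) - \phi(y) f(x)$, collapses to $0 = 0$ when $f = 0$. Condition $(a)$ then reduces to the hypothesis $d \in \Der(\Lr)$, which is part of the definition of $\Der_{\Ls}(\Lr)$.

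The only substantive point is condition $(d)$: $[d_1,\phi(x)] = \ad_{\Lr}(f(x)) + \phi(d_2(x))$ for all $x \in \Ls$. With $f = 0$ and $d_2 = 0$, the right-hand side vanishes, so $(d)$ becomes $[d,\phi(x)] = 0$, i.e.\ $d\,\phi(x) = \phi(x)\, d$ as endomorphisms of $\Lr$, for every $x\in \Ls$. Evaluating at $a\in \Lr$ this is precisely $d(\phi(x)a) = \phi(x)d(a)$, which is the $\Ls$-equivariance requirement in the definition of $\Der_{\Ls}(\Lr)$.

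There is no real obstacle here; the corollary is a direct specialization of Lemma \ref{5.7}. The forward direction ($(d,0,0)$ a derivation $\Rightarrow$ $d \in \Der_{\Ls}(\Lr)$) follows from conditions $(a)$ and $(d)$, and the converse follows because conditions $(a)$--$(d)$ are then all satisfied, so Lemma \ref{5.7} guarantees that the associated map $D$ is a derivation of $\Lg$.
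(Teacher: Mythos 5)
Your proposal is correct and follows exactly the same route as the paper: specialize Lemma \ref{5.7} to the triple $(d,0,0)$, note that conditions $(a)$--$(c)$ reduce to $d\in\Der(\Lr)$ and trivialities, and observe that condition $(d)$ becomes the commutation $[d,\phi(x)]=0$, which is precisely the $\Ls$-equivariance in the definition of $\Der_{\Ls}(\Lr)$. The paper's proof is just a one-sentence version of the same verification.
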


\begin{proof}
Condition $(d)$ in Lemma $\ref{5.7}$ just says then that $d$ is an $\Ls$-morphism, while
the other conditions $(a),(b),(c)$ are automatically satisfied when $d\in\Der(\Lr)$.
\end{proof}

In case that $\Lr=\La$ is an abelian Lie algebra, we write $\Der_{\Ls}(\Lr)=\Hom_{\Ls}(\La,\La)=\End_{\Ls}(\La)$.
Any $d\in \End_{\Ls}(\La)$ corresponding to the triple $(d,0,0)$ determines a derivation of $\Lg=\La\rtimes \Ls$.

\begin{lem}\label{innerder}
Let $\Lg=\La\rtimes \Ls$ be the semidirect product of a semisimple Lie algebra $\Ls$ and an abelian Lie algebra
$\La$. Suppose that $d\in \End_{\Ls}(\La)$ determines an inner derivation $D$ of $\Lg$. Then $D=0$ and hence $d=0$.
\end{lem}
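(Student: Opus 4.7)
The plan is to unpack the definitions directly. Assume $D = \ad(a_0,x_0)$ for some $(a_0,x_0)\in\Lg = \La\rtimes\Ls$. Using the bracket formula \eqref{c7} and the fact that $\La$ is abelian (so $[a_0,a]=0$), I compute
\[
D(a,x) = [(a_0,x_0),(a,x)] = \bigl(\phi(x_0)a - \phi(x)a_0,\; [x_0,x]\bigr)
\]
for all $(a,x)\in\Lg$. On the other hand, since $D$ is determined by the triple $(d,0,0)$, we have $D(a,x) = (d(a),0)$. Comparing the two expressions gives one equation in the $\Ls$-component and one in the $\La$-component.

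First I would exploit the $\Ls$-component, which yields $[x_0,x]=0$ for every $x\in\Ls$. Because $\Ls$ is semisimple, its centre is trivial, so $x_0=0$. Substituting this back, the $\La$-component equation becomes
\[
d(a) = -\phi(x)a_0 \quad\text{for all } a\in\La,\; x\in\Ls.
\]
The right-hand side is independent of $a$, so $d$ is a constant map; since $d$ is linear this forces $d=0$. (Equivalently, one may set $x=0$ to get $d(a)=0$.) Consequently $D(a,x)=(d(a),0)=0$, which is the claim.

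I do not anticipate any real obstacle: the argument is a direct matching of components, with semisimplicity of $\Ls$ entering only through the triviality of its centre and abelianness of $\La$ entering to kill the $[a_0,a]$ term. No appeal to the earlier cohomological results, or even to conditions $(a)$--$(d)$ of Lemma~\ref{5.7}, is needed beyond the shape of $D$ given by the triple $(d,0,0)$.
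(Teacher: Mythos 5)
Your proof is correct and follows essentially the same route as the paper's: both arguments force $x_0=0$ from the $\Ls$-component using that the centre of the semisimple algebra $\Ls$ is trivial, and then use abelianness of $\La$ to conclude that the remaining adjoint action, and hence $d$, vanishes. The only cosmetic difference is that you write out the full bracket \eqref{c7} and compare components, whereas the paper evaluates $\ad_{\Lg}(a,x)$ on the special elements $(0,y)$ and $(b,0)$; the content is identical.
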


\begin{proof}
Suppose that $D=\ad_{\Lg}(a,x)$ for some $(a,x)\in \Lg$, and assume that $x\neq 0$. Then there exists
a $y\in \Ls$ with $[x,y]\neq 0$, so that also $(\ad_{\Lg}(a,x))(0,y)\neq (0,0)$. But this is a contradiction 
since $D(0,y)=(0,0)$ for the triple $(d,0,0)$ by \eqref{c8}. It follows that $x=0$ and $D=\ad_{\Lg}(a,0)$.
We already have  $(\ad_{\Lg}(a,0))(0,y)= D(0,y)=(0,0)$. Since $\La$ is abelian we also have 
$(\ad_{\Lg}(a,0))(b,0)= (0,0)$ for all $b\in \La$. We conclude that $D=\ad_{\Lg}(a,0)$ is the zero map, 
and hence $d=0$.
\end{proof}

The space of outer derivations of $\Lg$ is the space of derivations $\Der(\Lg)=Z^1(\Lg,\Lg)$
modulo the inner derivations $\ad(\Lg)=B^1(\Lg,\Lg)$. We denote this space by ${\rm Out}(\Lg)=H^1(\Lg,\Lg)$. 
The above lemma shows that we have an injective linear map
\[
\psi\colon \End_{\Ls}(\La)\ra H^1(\Lg,\Lg),
\]
mapping an $\Ls$-morphism $d$ to the class of the derivation $D$ modulo inner derivations determined by
the triple $(d,0,0)$.

\begin{prop}\label{5.11}
The map $\psi\colon \End_{\Ls}(\La)\ra H^1(\Lg,\Lg)$ is an isomorphism.
\end{prop}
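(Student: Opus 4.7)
The plan is to derive surjectivity of $\psi$; injectivity is already contained in Lemma~\ref{innerder}. Given a derivation $D$ of $\Lg$ described by a triple $(d_1,f,d_2)$ in the sense of \eqref{c8}, the aim is to modify $D$ by suitable inner derivations so that the resulting representative of $[D]\in H^1(\Lg,\Lg)$ has triple $(d,0,0)$ with $d\in\End_\Ls(\La)$, which by construction equals $\psi(d)$.

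First I would compute, using the bracket formula \eqref{c7} and $[\La,\La]=0$, the triple corresponding to a general inner derivation. A direct calculation gives that $\ad_\Lg(a_0,x_0)$ is determined by
\[
\bigl(\phi(x_0),\; x\mapsto -\phi(x)a_0,\; \ad_\Ls(x_0)\bigr).
\]
Thus the available cohomological freedom consists in changing $d_2$ by an arbitrary inner derivation of $\Ls$, changing $f$ by an arbitrary principal $1$-coboundary for the $\Ls$-module $\La$, and shifting $d_1$ by an operator of the form $\phi(x_0)$.

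The core of the argument is then two applications of Whitehead's first lemma. By Lemma~\ref{5.7}(b), $d_2\in\Der(\Ls)=Z^1(\Ls,\Ls)$; since $\Ls$ is semisimple we have $H^1(\Ls,\Ls)=0$, so $d_2=\ad_\Ls(x_0)$ for some $x_0\in\Ls$, and replacing $D$ by $D-\ad_\Lg(0,x_0)$ reduces to the case $d_2=0$. With $d_2=0$, condition (c) of Lemma~\ref{5.7} says that $f\in Z^1(\Ls,\La)$ for the $\Ls$-module structure $\phi$. Whitehead's first lemma again gives $H^1(\Ls,\La)=0$, so $f(x)=\phi(x)a_0$ for some $a_0\in\La$. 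Because $\ad_\Lg(-a_0,0)$ has vanishing $\Ls$-component, replacing $D$ by $D-\ad_\Lg(-a_0,0)$ cleans $f$ to $0$ without reintroducing a nonzero $d_2$.

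At this stage $D$ is represented by a triple $(d_1,0,0)$, so condition (d) of Lemma~\ref{5.7} collapses to $[d_1,\phi(x)]=0$ for every $x\in\Ls$. Since $\La$ is abelian, $d_1$ is automatically a derivation of $\La$, and the commutation condition makes it an $\Ls$-module endomorphism, i.e.\ $d_1\in\End_\Ls(\La)$. Hence $[D]=\psi(d_1)$, which gives surjectivity. I do not expect any substantial obstacle: the argument is bookkeeping powered by Whitehead's two vanishing theorems, and the only delicate point is to ensure that the second modification does not disturb the reduction $d_2=0$ already achieved, which holds because the chosen inner derivation $\ad_\Lg(-a_0,0)$ has zero $\Ls$-component.
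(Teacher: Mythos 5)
Your proposal is correct and follows essentially the same route as the paper: reduce $d_2$ to zero using that all derivations of the semisimple $\Ls$ are inner, then kill $f$ using Whitehead's first lemma $H^1(\Ls,\La)=0$ together with the explicit triple of an inner derivation $\ad_\Lg(a_0,0)$, leaving a triple $(d_1,0,0)$ with $d_1\in\End_\Ls(\La)$ by condition (d) of Lemma \ref{5.7}. Your explicit computation of the triple of a general inner derivation and the remark that the second correction does not disturb $d_2=0$ are accurate and only make explicit what the paper leaves implicit.
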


\begin{proof}
We have to show that the map $\psi$ is onto. Let $D$ be a derivation of $\Lg$ representing
a given class of $ H^1(\Lg,\Lg)$. Then $D$ is determined by a triple $(d_1,f,d_2)$ satisfying
the conditions of Lemma $\ref{5.7}$. Since $\Ls$ is semisimple, the derivation $d_2$ is inner,
so that $d_2=\ad_{\Ls}(x)$ for some $x\in \Ls$. By replacing $D$ with $D-\ad_{\Lg}(x)$ we may assume
that $d_2=0$. Hence $D$ is determined by a triple $(d_1,f,0)$. By condition $(c)$ of  Lemma $\ref{5.7}$
we know that $f\colon \Ls\ra \La$ is a $1$-cocycle. By Whitehead's first Lemma we have $H^1(\Ls,\La)=0$,
since $\Ls$ is semisimple. So $f$ is a $1$-coboundary, and there is a $b\in \La$ such that
$f(x)=-\phi(x)b$ for all $x\in \Ls$. On the other hand we have
\[
\ad_{\Lg}(b,0)(a,x)=[(b,0),(a,x)]=(-\phi(x)b,0)=(f(x),0).
\]
hence by considering $D-\ad_{\Lg}(b,0)$ we may assume that $D$ is determined by a triple $(d_1,0,0)$.
But this just means that the cohomology class of $D$ is equal to $\psi(d_1)$, so that $\psi$ is onto.
\end{proof}


\begin{cor}
Let $\Lg=\La\rtimes \Ls$ be a perfect Lie algebra with nonzero abelian radical $\La$, which
as an $\Ls$-module is irreducible. Assume that $\CD (0,1,1)=0$. Then any commutative post-Lie algebra 
structure on $\Lg$ is trivial.
\end{cor}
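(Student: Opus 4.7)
The plan is to mimic the argument of Proposition~\ref{5.3}, but working modulo inner derivations of $\Lg$ instead of inside $\Der(\Lg)$ itself. By axioms \eqref{com5} and \eqref{com6}, left multiplication gives a Lie algebra homomorphism $L\colon \Lg\to \Der(\Lg)$. Since $\ad(\Lg)$ is an ideal in $\Der(\Lg)$, the quotient $H^1(\Lg,\Lg)=\Der(\Lg)/\ad(\Lg)$ inherits a Lie bracket, and composition yields a homomorphism $\bar L\colon \Lg\to H^1(\Lg,\Lg)$. By Proposition~\ref{5.11} there is a vector space isomorphism $\psi\colon \End_{\Ls}(\La)\to H^1(\Lg,\Lg)$, and a direct computation---using that the triple $(d,0,0)$ determines the derivation $(a,x)\mapsto (d(a),0)$---shows that $\psi$ is in fact a Lie algebra isomorphism for the commutator bracket on $\End_{\Ls}(\La)$.

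Since $\La$ is an irreducible $\Ls$-module over $\C$, Schur's lemma gives $\End_{\Ls}(\La)=\C\cdot\id$, which is abelian. The perfectness of $\Lg$ then forces
\[
\bar L(\Lg)=\bar L([\Lg,\Lg])\subseteq [\bar L(\Lg),\bar L(\Lg)]=0,
\]
so that every $L(x)$ is an inner derivation. In order to write $L(x)=\ad(\phi(x))$ with a well-defined linear map $\phi\colon \Lg\to \Lg$, I would next verify that $\Lg$ is centerless: the center $Z(\Lg)$ lies in $\rad(\Lg)=\La$ and is an $\Ls$-submodule, hence equals $0$ or $\La$ by irreducibility; the second possibility would force $\Ls$ to act trivially on $\La$, giving $[\Lg,\Lg]=\Ls\neq\Lg$ and contradicting the perfectness of $\Lg$. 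Thus $\ad\colon\Lg\to\ad(\Lg)$ is an isomorphism, and $\phi$ exists and is linear.

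Finally, the commutativity identity $x\cdot y=y\cdot x$ translates to $[\phi(x),y]=[\phi(y),x]$ for all $x,y\in \Lg$, which is precisely the defining condition for $\phi\in\CD(0,1,1)$. The hypothesis $\CD(0,1,1)=0$ then forces $\phi=0$, and hence $x\cdot y=0$. The main obstacle in this outline is verifying that $\psi$ is a morphism of Lie algebras and not merely of vector spaces; once this is granted, the perfectness assumption instantly kills $\bar L$, and the remainder is a direct adaptation of the proof of Proposition~\ref{5.3}.
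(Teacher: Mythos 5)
Your proof is correct, and its skeleton is the paper's: Proposition~\ref{5.11} plus Schur's lemma to pin down $\Der(\Lg)$ modulo inner derivations, perfectness to dispose of the one-dimensional outer part, and $\CD(0,1,1)=0$ to finish. The one step you do differently is the middle one. The paper writes $L(x)=\ad(\phi(x))+\la(x)\id_{\La}$ directly and kills $\la$ by a trace argument: $L$ is a Lie algebra homomorphism and $\Lg$ is perfect, so $\tr(L(x))=0$, forcing $\dim(\La)\cdot\la(x)=0$. You instead pass to $\bar L\colon\Lg\to H^1(\Lg,\Lg)$ and observe that a perfect Lie algebra has no nonzero homomorphism to an abelian one. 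Both work; yours avoids the trace computation, while the paper's avoids having to discuss the Lie structure on $H^1(\Lg,\Lg)$ at all. Note that the ``main obstacle'' you flag is not really one: since $H^1(\Lg,\Lg)\cong\End_{\Ls}(\La)=\C\cdot\id$ is one-dimensional, it is abelian as a Lie algebra no matter what, so you do not actually need $\psi$ to be a Lie morphism (though your verification of it, via $[D_d,D_{d'}]=D_{[d,d']}$ on triples $(d,0,0)$, is correct). Your explicit check that $\Lg$ is centerless, so that $\phi$ is a well-defined linear map, is a detail the paper leaves implicit; it is a worthwhile addition.
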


\begin{proof}
By Schur's Lemma we have $\End_{\Ls}(\La)=\C \cdot \id$. Using Proposition $\ref{5.11}$ we can write every 
derivation of $\Lg$ as $d+\la \id$ with an inner derivation $d$. Since the left multiplications $L(x)$ 
given by $L(x)(y)=x\cdot y$ are derivations of $\Lg$, there exist linear maps  $\phi\colon \Lg\ra \Lg$ and 
$\la\colon \Lg\ra \C$ such that
\[
L(x)=\ad (\phi(x))+\la(x)\id .
\]
Because $\Lg$ is perfect and $L([x,y])=[L(x),L(y)]$ for all $x,y\in \Lg$ we have
$\tr (L(x))=0$ for all $x\in \Lg$, i.e., $\dim (\La)\cdot \la(x)=0$ for all $x\in \Lg$.
Since $\dim (\La)>0$ it follows that $\la(x)=0$ for all $x\in \Lg$. Then the map $\phi$ 
satisfies, for all $x,y\in \Lg$, 
\[
0=x\cdot y-y\cdot x=[\phi(x),y]+[x,\phi(y)].
\]
This means $\phi\in \CD(0,1,1)=0$, so that $\phi=0$ and $L(x)=0$.
\end{proof}

\begin{rem}
For many Lie algebras $\Lg$ we can show that the space $\CD(0,1,1)$ is trivial, e.g., for
$\Lg=\Ls\Ll_n(\C)\ltimes V(m)$. However, in general this is a problem. Instead we will use
other arguments which do not depend on the space $\CD(0,1,1)$. 
\end{rem}

The aim still is to study commutative post-Lie algebra structures on {\it perfect} Lie algebras.
We can always write a perfect Lie algebra as $\Lg=\Ls\ltimes \Ln$ with Levi subalgebra $\Ls$ and nilpotent
radical $\Ln$. We still need some more lemmas.

\begin{lem}
Let $\Ln$ be a nilpotent Lie algebra and assume that $\Lh$ is a subalgebra of $\Ln$ for which
$\Lh+ [\Ln,\Ln]=\Ln$. Then $\Lh=\Ln$.
\end{lem}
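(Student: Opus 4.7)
My plan is to prove the stronger statement that $\Lh+\Ln^i=\Ln$ for every $i\ge 1$, where $\Ln^0=\Ln$ and $\Ln^i=[\Ln,\Ln^{i-1}]$ is the lower central series; since $\Ln$ is nilpotent, $\Ln^c=0$ for some $c$, and taking $i=c$ will then yield $\Lh=\Ln$. The base case $i=1$ is exactly the hypothesis. For the inductive step, assuming $\Lh+\Ln^i=\Ln$, it suffices to establish the inclusion $\Ln^i\subseteq \Lh+\Ln^{i+1}$, because this gives
\[
\Ln=\Lh+\Ln^i\subseteq \Lh+(\Lh+\Ln^{i+1})=\Lh+\Ln^{i+1}.
\]

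The heart of the argument is therefore this sub-claim, which I would prove by a direct multilinear expansion. Every generator of $\Ln^i$ is an $(i+1)$-fold iterated bracket $[x_1,[x_2,\ldots,[x_i,x_{i+1}]]]$ with $x_j\in\Ln$. Using the hypothesis, I would write each $x_j=h_j+n_j$ with $h_j\in\Lh$ and $n_j\in\Ln^1$, and expand by bilinearity into $2^{i+1}$ terms, each an iterated bracket of depth $i+1$ whose arguments are either in $\Lh$ or in $\Ln^1$. The unique term with all arguments in $\Lh$ lies in $\Lh$, because $\Lh$ is a subalgebra. Every other term has at least one argument from $\Ln^1$; applying the standard inclusion $[\Ln^a,\Ln^b]\subseteq \Ln^{a+b+1}$ repeatedly up the bracketing, such a term lies in $\Ln^{k+i}\subseteq \Ln^{i+1}$, where $k\ge 1$ is the number of $\Ln^1$-arguments. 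Summing, the original bracket lies in $\Lh+\Ln^{i+1}$, as required.

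I do not anticipate any serious obstacle: the argument is essentially combinatorial bookkeeping on the nilpotency index after expansion. The only conceptual point worth flagging is that the hypothesis that $\Lh$ is a \emph{subalgebra}, not merely a subspace, is used exactly once but crucially, namely to place the purely-$\Lh$ term of the expansion back in $\Lh$.
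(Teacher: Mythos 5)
Your proof is correct, and it shares the paper's basic skeleton: both arguments track the chain of subspaces $\Lh+\Ln^i$ down the lower central series and show it never drops below $\Ln$. The descent step, however, is executed differently. The paper sets $\Lh_i=\Lh+\Ln^i$ and observes that $[\Lh_i,\Lh_i]\subseteq \Lh_{i+1}$ — which needs only $[\Lh,\Lh]\subseteq\Lh$, $[\Lh,\Ln^i]\subseteq[\Ln,\Ln^i]=\Ln^{i+1}$ and $[\Ln^i,\Ln^i]\subseteq\Ln^{i+1}$, i.e.\ nothing beyond the definition of the lower central series — so that $\Lh_i/\Lh_{i+1}$ is abelian; when $\Lh_i=\Ln$ this forces $[\Ln,\Ln]\subseteq\Lh_{i+1}$, and the hypothesis $\Ln=\Lh+[\Ln,\Ln]$ immediately gives $\Lh_{i+1}=\Ln$ (the paper phrases this as a contradiction at the first index where the chain would drop, but that framing is cosmetic). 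You instead establish $\Ln^i\subseteq\Lh+\Ln^{i+1}$ by splitting each entry of an iterated bracket as $h_j+n_j$ and expanding into $2^{i+1}$ terms, which relies on the general inclusion $[\Ln^a,\Ln^b]\subseteq\Ln^{a+b+1}$ — a true but Jacobi-identity-dependent fact, stronger than what the paper invokes. Both routes are valid; the paper's is leaner in prerequisites and avoids the combinatorial bookkeeping, while yours makes more transparent exactly where the hypothesis that $\Lh$ is a subalgebra (and not just a subspace) is used, namely to absorb the single all-$\Lh$ term of the expansion.
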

\begin{proof}
For any integer $i\ge 0$, we define $\Lh_i= \Lh+\Ln^i$, where the $\Ln^i$ denote the
terms of the lower central series of $\Ln$, with $\Ln^0=\Ln$. By assumption we have
$\Lh_1= \Lh+ [\Ln,\Ln]=\Ln$ and $\Lh_i=\Lh$ for all $i$ bigger or equal than the
nilpotency class of $\Ln$.\\
It is easy to see that $\Lh_{i+1}$ is an ideal of $\Lh_i$ for any $i$, and that the quotient 
$\Lh_i/\Lh_{i+1}$ is abelian. Now assume that $\Lh\neq \Ln$. Then there exists a positive integer $i$ with
$\Lh_i=\Ln$ and $\Lh_{i+1}\neq \Ln$.
For this $i$ we have that $\Lh_i/\Lh_{i+1}=\Ln/\Lh_{i+1}$ is abelian and hence $[\Ln,\Ln]\subseteq \Lh_{i+1}$.
But then $\Ln= \Lh+[\Ln,\Ln]\subseteq \Lh_{i+1}$ which is a contradiction.
\end{proof}

\begin{lem} \label{ideal}
Suppose that $I$ is an ideal of a perfect Lie algebra $\Lg=\Ls\ltimes \Ln$ with $\Ls \subseteq I$. 
Then it follows that $I=\Lg$.
\end{lem}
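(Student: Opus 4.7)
The plan is to reduce this to the previous lemma by intersecting $I$ with the nilradical. Set $\Lh = I \cap \Ln$. Since $I$ is an ideal of $\Lg$ and $\Ln$ is a subalgebra of $\Lg$, we immediately have that $\Lh$ is an ideal of $\Ln$. My goal is to show that $\Lh + [\Ln,\Ln] = \Ln$, since then the previous lemma will force $\Lh = \Ln$, yielding $\Ln \subseteq I$ and hence $I = \Ls + \Ln \subseteq I$, so $I = \Lg$.

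To show $\Lh + [\Ln,\Ln] = \Ln$, I would first observe that $[\Ls,\Ln] \subseteq \Lh$. Indeed, $\Ls \subseteq I$ implies $[\Ls,\Ln] \subseteq [I,\Lg] \subseteq I$, while $\Ln$ being an ideal of $\Lg$ gives $[\Ls,\Ln]\subseteq \Ln$; combining these yields $[\Ls,\Ln]\subseteq I\cap \Ln = \Lh$.

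Next I would use that $\Lg$ is perfect. Expanding $[\Lg,\Lg]$ with $\Lg = \Ls + \Ln$ gives
\[
\Lg = [\Lg,\Lg] = [\Ls,\Ls] + [\Ls,\Ln] + [\Ln,\Ln] = \Ls + [\Ls,\Ln] + [\Ln,\Ln].
\]
Projecting onto the $\Ln$-component of the decomposition $\Lg = \Ls \ltimes \Ln$ (noting that $[\Ls,\Ln]$ and $[\Ln,\Ln]$ both lie in $\Ln$), this forces
\[
\Ln = [\Ls,\Ln] + [\Ln,\Ln] \subseteq \Lh + [\Ln,\Ln] \subseteq \Ln,
\]
so $\Lh + [\Ln,\Ln] = \Ln$. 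Applying the preceding lemma to the subalgebra $\Lh \subseteq \Ln$ concludes that $\Lh = \Ln$, and therefore $I = \Lg$.

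There is no serious obstacle here: the key observation is simply that perfectness of $\Lg$, when combined with the Levi decomposition, forces the nilradical $\Ln$ to be generated modulo $[\Ln,\Ln]$ by the $\Ls$-action, which is exactly the hypothesis needed to invoke the previous lemma.
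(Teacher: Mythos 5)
Your proof is correct, but it takes a genuinely different route from the paper's. The paper first treats the case of abelian $\Ln$ by invoking complete reducibility of $\Ls$-modules: it splits off an $\Ls$-complement $\La$ of $I\cap\Ln$ in $\Ln$, shows $[\Lg,\La]=0$, and concludes from perfectness; it then reduces the general case to the abelian one by passing to $\Lg/[\Ln,\Ln]$, which yields $I+[\Ln,\Ln]=\Lg$ and hence $(I\cap\Ln)+[\Ln,\Ln]=\Ln$, and only then applies the preceding lemma. You bypass the abelian case and Weyl's theorem entirely: the single observation $[\Ls,\Ln]\subseteq I\cap\Ln$ (valid because $I$ and $\Ln$ are both ideals and $\Ls\subseteq I$), combined with expanding $\Lg=[\Lg,\Lg]=[\Ls,\Ls]+[\Ls,\Ln]+[\Ln,\Ln]$ and projecting onto the $\Ln$-component, gives $(I\cap\Ln)+[\Ln,\Ln]=\Ln$ directly, after which the preceding lemma finishes the argument exactly as in the paper. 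Your version is shorter and more elementary, using no representation theory beyond the Levi decomposition itself; the paper's detour through the abelian quotient costs a case distinction but isolates a statement (the abelian case handled by complete reducibility) that is conceptually self-contained. Both arguments rest in the end on the same nilpotency lemma, and both are complete.
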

\begin{proof}
We first consider the case where $\Ln$ is abelian. Since $I$ contains $\Ls$, we have that
\[ I = \Ls + (I\cap \Ln).\]
Now, $I\cap \Ln$ is also an ideal of $\Lg$ and so $I\cap \Ln$ is an $\Ls$--submodule of $\Ln$. As $\Ls$ 
is semisimple, there exist a complementary $\Ls$-submodule $\La$ of $\Ln$ so that 
$\Ln = (I\cap \Ln)\oplus \La$.  It follows that $\La$ is also an ideal of $\Lg$ and that $\Lg=I \oplus \La$. 
But then $[I,\La]=0$, which gives $[\Lg,\La]=0$. This implies
$\Lg=[\Lg,\Lg]\subseteq [I,I]\subseteq I$, so that $I=\Lg$. \\
Now we consider the general case. When $\Lg=\Ln\rtimes \Ls$ is perfect, then also
$\Lg/[\Ln,\Ln]= \left(\Ln/[\Ln,\Ln] \right) \rtimes \Ls$ is perfect.
From the previous case, we then find that
\[ 
(I+[\Ln,\Ln])/([\Ln,\Ln])=\Lg/[\Ln,\Ln]
\]
and hence $I+[\Ln,\Ln] = \Lg$. It follows that
\[ 
\Lg =\Ls + \Ln = \Ls + (I\cap \Ln) + [\Ln,\Ln].
\]
Hence we obtain $(I \cap \Ln)+ [\Ln,\Ln]= \Ln$. By the previous lemma we have 
$(I\cap \Ln)=\Ln$. From this, it follows that $I=\Lg$.
\end{proof}

Assume that $x\cdot y$ defines a commutative post-Lie structure on the perfect Lie algebra 
$\Lg=\Ls\ltimes \Ln$. Note that $[\Ln,\Ln]$ is both a Lie ideal of $\Lg$ and an ideal for the 
post-Lie product, because all left and right multiplications are derivations of $\Lg$.
It follows that there is an induced post-Lie product on $\Lg/[\Ln,\Ln]$.
We have the following reduction result:
\begin{prop}
Let $x\cdot y$ be a commutative post-Lie product on a perfect Lie algebra $\Lg=\Ls\ltimes \Ln$. If the
induced product on $\Lg/[\Ln,\Ln]$ is the zero product, then also the product on $\Lg$ is zero.
\end{prop}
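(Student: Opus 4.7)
The plan is to show that $\ker L$ is an ideal of $\Lg$ containing the Levi subalgebra $\Ls$; once this is done, Lemma~\ref{ideal} immediately forces $\ker L=\Lg$, which is exactly the statement that the product is trivial. Since $L\colon \Lg\to \End(\Lg)$ is a Lie algebra representation by~\eqref{com5}, $\ker L$ is automatically an ideal of $\Lg$, so the task reduces to proving that $L(s)=0$ for every $s\in\Ls$.

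The hypothesis translates to $L(x)(\Lg)\subseteq [\Ln,\Ln]=\Ln^1$ for every $x\in\Lg$, and the crucial step is to leverage this into the nilpotency of each $L(x)$ as an endomorphism of $\Lg$. I would prove by induction on $i\ge 0$ the filtration estimate
\[
L(x)(\Ln^i)\subseteq \Ln^{i+1}.
\]
The base case is the hypothesis restricted to $\Ln\subseteq\Lg$. For the inductive step, take $a\in\Ln$ and $b\in\Ln^{i-1}$; since $L(x)$ is a derivation of $\Lg$,
\[
L(x)[a,b]=[L(x)a,b]+[a,L(x)b]\in [\Ln^1,\Ln^{i-1}]+[\Ln,\Ln^i]\subseteq\Ln^{i+1},
\]
using the containment $[\Ln^p,\Ln^q]\subseteq\Ln^{p+q+1}$ forced by the indexing $\Ln^0=\Ln$, $\Ln^i=[\Ln,\Ln^{i-1}]$. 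Picking $c$ with $\Ln^c=0$, iterated application yields $L(x)^c(\Ln)=0$, and because $L(x)(\Ls)\subseteq\Ln^1$ one also obtains $L(x)^k(\Ls)\subseteq\Ln^k$, so that $L(x)^c(\Ls)=0$. Hence $L(x)$ is a nilpotent endomorphism of $\Lg$ for every $x\in\Lg$.

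Applied to the restriction $L|_\Ls\colon \Ls\to\End(\Lg)$, this says that the image $L(\Ls)$ consists of nilpotent operators. By Engel's theorem, $L(\Ls)$ is then a nilpotent Lie subalgebra of $\End(\Lg)$. On the other hand $L(\Ls)$ is a homomorphic image of the semisimple Lie algebra $\Ls$, and therefore itself semisimple. A Lie algebra that is simultaneously nilpotent and semisimple must be zero, so $L(s)=0$ for all $s\in\Ls$. This places $\Ls$ inside $\ker L$, and Lemma~\ref{ideal} concludes the proof.

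The main obstacle I expect is the inductive filtration step: the hypothesis only gives the weak statement that $L(x)$ lands in $\Ln^1=[\Ln,\Ln]$, and the entire argument rests on converting this into one extra degree of nilpotency on every layer of the lower central series of $\Ln$. The derivation property of $L(x)$ together with $[\Ln^p,\Ln^q]\subseteq\Ln^{p+q+1}$ is exactly what makes this conversion work; once nilpotency is in hand the representation-theoretic finish via Engel and Lemma~\ref{ideal} is essentially automatic.
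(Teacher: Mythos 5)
Your proposal is correct and follows essentially the same route as the paper: establish $L(x)(\Ln^i)\subseteq\Ln^{i+1}$ from the hypothesis $L(x)(\Lg)\subseteq[\Ln,\Ln]$ and the derivation property, conclude that every $L(x)$ is nilpotent, apply Engel's theorem to force $\Ls\subseteq\ker(L)$, and finish with Lemma~\ref{ideal}. The only difference is that you spell out the filtration induction via $[\Ln^p,\Ln^q]\subseteq\Ln^{p+q+1}$, which the paper simply recalls as a known fact about derivations of nilpotent Lie algebras.
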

\begin{proof}
Recall that when $D$ is a derivation of a nilpotent Lie algebra $\Ln$ for which
$D(\Ln)\subseteq [\Ln,\Ln]$, then $D(\Ln^i)\subseteq \Ln^{i+1}$. For the left multiplication
operator $L(x)$ of the commutative post-Lie algebra product $x\cdot y$ on $\Lg$ we have by assumption
that $L(x)(\Lg)\subseteq [ \Ln,\Ln]$. Now, $L(x) $ acts as a derivation on $[\Ln,\Ln]$. Hence we find that
$L^i(x)(\Lg)\subseteq \Ln^{i+1}$ for all $i\ge 1$. It follows that $L(x)$ is a nilpotent map. Hence
$L\colon \Lg \rightarrow \Der(\Lg)$ is a morphism of Lie algebras, where each element of $L(\Lg)$ is a
nilpotent map. By Engel's theorem it follows that $L(\Lg)$ is a nilpotent Lie algebra. But then we must have 
$\Ls \subseteq \ker (L)$. But as the kernel of $L$ is an ideal of $\Lg$ containing $\Ls$, it follows that
$\ker(L)=\Lg$ and so the product $x\cdot y$ is the zero product.
\end{proof}
\begin{cor}
If there exists a perfect Lie algebra with a non-trivial commutative post Lie algebra structure, then 
there also exists such an example with an abelian radical.
\end{cor}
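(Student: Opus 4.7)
The plan is to read the corollary as an immediate contrapositive of the preceding proposition. Starting from a perfect Lie algebra $\Lg=\Ls\ltimes \Ln$ equipped with a non-trivial commutative post-Lie product $x\cdot y$, I would pass to the quotient $\bar{\Lg}=\Lg/[\Ln,\Ln]$, which has the form $\Ls\ltimes (\Ln/[\Ln,\Ln])$ with abelian radical $\Ln/[\Ln,\Ln]$. The goal is to show that $\bar{\Lg}$ inherits a non-trivial commutative post-Lie structure, providing the required example.

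The steps are as follows. First, $\bar{\Lg}$ is perfect: any quotient of a perfect Lie algebra is perfect, since $[\bar{\Lg},\bar{\Lg}]$ is the image of $[\Lg,\Lg]=\Lg$. Second, as noted in the discussion preceding the proposition, $[\Ln,\Ln]$ is simultaneously a Lie ideal of $\Lg$ (it is a characteristic ideal of the nilradical $\Ln$, hence stable under $\ad(\Lg)$) and an ideal for the post-Lie product (the left/right multiplications $L(x)$ are derivations of $\Lg$, so they preserve $\Ln$ and hence $[\Ln,\Ln]$). Consequently the post-Lie product on $\Lg$ descends to a well-defined commutative product on $\bar{\Lg}$, and the three identities \eqref{com4}--\eqref{com6} pass to the quotient, giving a commutative post-Lie algebra structure on $\bar{\Lg}$. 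Third, if the induced product on $\bar{\Lg}=\Lg/[\Ln,\Ln]$ were the zero product, then by the previous proposition the original product on $\Lg$ would also vanish, contradicting our assumption. Hence the induced product is non-trivial.

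There is essentially no hard step here: the only point to verify carefully is the compatibility of $[\Ln,\Ln]$ with both structures, which has already been observed before the proposition and simply amounts to the fact that derivations of $\Lg$ preserve the characteristic ideal $[\Ln,\Ln]$. Everything else is an application of the contrapositive of the previous proposition together with the observation that quotients of perfect Lie algebras remain perfect, so the abelian radical $\Ln/[\Ln,\Ln]$ of $\bar{\Lg}$ supplies the required example.
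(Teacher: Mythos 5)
Your proof is correct and is exactly the intended argument: the corollary is stated in the paper as an immediate consequence of the preceding proposition, obtained by passing to $\Lg/[\Ln,\Ln]$ (which is perfect with abelian radical) and noting that the induced product cannot vanish by the contrapositive of that proposition. The compatibility of $[\Ln,\Ln]$ with both the Lie bracket and the post-Lie product is, as you say, already observed in the paper just before the proposition, so nothing further is needed.
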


For a perfect Lie algebra $\Lg=\Ls\ltimes \Ln$ denote by $\Der(\Lg,\Ln)$ those derivations 
$D$ of $\Lg$ determined by a triple $(d_1,f,0)$, i.e., those with $D(\Lg)\subseteq \Ln$.

\begin{lem}\label{partder}
Let $\Lg=\Ls\ltimes \La$ be a Lie algebra with abelian radical $\La$. Then $\Der(\Lg,\La)$ 
is a Lie subalgebra of 
$\Der(\Lg)$ which contains $Z^1(\Ls, \La)$ as an abelian ideal and can be written as a semidirect sum
\[  
\Der(\Lg,\La) = Z^1(\Ls,\La) \rtimes \Der_\Ls(\La).
\]
\end{lem}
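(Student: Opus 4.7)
The plan is to use Lemma~\ref{5.7} to parametrize $\Der(\Lg,\La)$ explicitly. An element $D\in\Der(\Lg,\La)$ is by definition a derivation of $\Lg$ with image in $\La$, equivalently one whose associated triple $(d_1,f,d_2)$ (in the notation of Lemma~\ref{5.7}) has $d_2=0$. So I would first rewrite the four conditions of that lemma in the present setting, where $\La$ is abelian. Condition~(a) becomes $d_1\in\End(\La)$; (b) is vacuous; (c) says $f\in Z^1(\Ls,\La)$; and in (d) both $\ad_\La(f(x))$ and $\phi(d_2(x))$ vanish, so the condition collapses to $[d_1,\phi(x)]=0$ for all $x\in\Ls$, i.e.\ $d_1\in\End_\Ls(\La)=\Der_\Ls(\La)$. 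Hence as a vector space
\[
\Der(\Lg,\La) = \{(d_1,f,0)\mid d_1\in\Der_\Ls(\La),\ f\in Z^1(\Ls,\La)\}\cong \Der_\Ls(\La)\oplus Z^1(\Ls,\La).
\]

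Next I would establish the Lie algebra structure by computing $[D,D']$ for two such derivations $D,D'$ coming from triples $(d_1,f,0)$ and $(d_1',f',0)$. A direct calculation using the formula $D(a,x)=(d_1(a)+f(x),0)$ yields
\[
[D,D'](a,x) = \bigl([d_1,d_1'](a) + (d_1\circ f'-d_1'\circ f)(x),\ 0\bigr),
\]
so $[D,D']$ corresponds to the triple $\bigl([d_1,d_1'],\ d_1\circ f'-d_1'\circ f,\ 0\bigr)$. In particular the third component is still zero, which shows that $\Der(\Lg,\La)$ is closed under brackets, hence is a Lie subalgebra of $\Der(\Lg)$. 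That $[d_1,d_1']\in\Der_\Ls(\La)$ and $d_1\circ f'-d_1'\circ f\in Z^1(\Ls,\La)$ is automatic from $[D,D']\in\Der(\Lg)$ (and can also be verified directly using $\Ls$-equivariance of $d_1,d_1'$ and the cocycle identity for $f,f'$).

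Finally, I would read the semidirect decomposition off this bracket formula. Setting $d_1=d_1'=0$ gives $[D,D']=0$, so the triples of shape $(0,f,0)$---corresponding to the image of $Z^1(\Ls,\La)$---form an abelian subspace. Setting only $f'=0$ gives $[D,D']=(0,-d_1'\circ f,0)$, which is again of that shape, so $Z^1(\Ls,\La)$ is an ideal and $\Der_\Ls(\La)$ acts on it by left composition. Setting $f=f'=0$ shows that the triples $(d,0,0)$ form a complementary Lie subalgebra isomorphic to $\Der_\Ls(\La)$. Combining these three observations yields the asserted semidirect sum $\Der(\Lg,\La)=Z^1(\Ls,\La)\rtimes\Der_\Ls(\La)$. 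I do not expect a genuine obstacle here: the whole argument is bookkeeping on top of Lemma~\ref{5.7}. The step warranting the most care is the simplification of condition~(d), which uses crucially that $\La$ is abelian so that $\ad_\La\equiv 0$ and the equation reduces cleanly to $\Ls$-equivariance of $d_1$.
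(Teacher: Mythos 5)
Your proposal is correct and follows essentially the same route as the paper: both reduce the characterization of $\Der(\Lg,\La)$ to the conditions of Lemma~\ref{5.7} (using that $\La$ abelian kills the $\ad_{\Lr}(f(x))$ term in condition~(d)), then compute the bracket of two triples $(d,f,0)$, $(d',f',0)$ to be $([d,d'],\,df'-d'f,\,0)$ and read off the semidirect decomposition. Your write-up is in fact slightly more detailed than the paper's, which leaves the final identification of the ideal, the complement, and the action as an easy consequence of the bracket formula.
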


\begin{proof} 
Since $\La$ is abelian, it follows from Lemma $\ref{5.7}$ that a triple $D=(d,f,0)$ belongs to
$\Der(\Lg,\La)$ if and only if $f\in Z^1(\Ls,\La)$ and $d\in \Der_{\Ls}(\La)$.
Let $D=(d,f,0)$ and $D'=(d',f',0)$ be two derivations in $\Der(\Lg,\La)$.
For any $(a,x)\in \La\rtimes \Ls$ we have that
\begin{eqnarray*}
[D,D'](a,x) & = & D(D'(a,x))- D'(D(a,x))\\
& = & D(d'(a)+f'(x),0) - D'(d(a) + f(x),0)\\
& = & ((dd'-d'd)(a) +(df'-d'f)(x),0)
\end{eqnarray*}
So it follows that the bracket $[D,D']$ is determined by the triple $([d,d'], df'-d'f,0)$. 
The rest of the lemma now follows easily.
\end{proof}

Now we can apply the previous lemmas to commutative post-Lie algebra structures.

\begin{thm}
Let $\Lg=\La\rtimes\Ls$ be a perfect Lie algebra with an abelian radical $\La$. Assume that
$\La$, as an $\Ls$--module, decomposes into pairwise nonisomorphic irreducible $\Ls$--modules. 
Then any commutative post-Lie algebra structure on $\Lg$ is trivial.
\end{thm}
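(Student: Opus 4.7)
The plan is to show every left multiplication $L(x)$ is an inner derivation of a very restricted shape, and then to deduce $L\equiv 0$ from perfectness of $\Lg$.

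First, the hypothesis combined with Schur's lemma gives $\End_\Ls(\La)\cong\C^k$ (one factor per irreducible summand $V_i$), which is a commutative associative algebra. Proposition~\ref{5.11} then produces a linear isomorphism $\psi\colon\End_\Ls(\La)\to H^1(\Lg,\Lg)$; using the explicit bracket of triples $(d,0,0)$ computed inside the proof of Lemma~\ref{partder}, this is in fact an isomorphism of Lie algebras, so $H^1(\Lg,\Lg)$ is abelian. Since $L\colon\Lg\to\Der(\Lg)$ is a Lie-algebra morphism by the post-Lie axioms and $\ad(\Lg)$ is an ideal in $\Der(\Lg)$, composing with the quotient $\Der(\Lg)\to H^1(\Lg,\Lg)$ yields a Lie-algebra morphism from the perfect algebra $\Lg$ into an abelian target, which must vanish. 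Hence every $L(x)$ is an inner derivation of $\Lg$.

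Next I would pin down the inner representative. Write $L(x)=\ad_\Lg(a(x)+s(x))$ with $a(x)\in\La$ and $s(x)\in\Ls$. Corollary~\ref{5.5} tells us $L(x)(\Lg)\subseteq\La$; evaluating on $y\in\Ls$ forces $[s(x),y]\in\Ls\cap\La=0$, so $s(x)\in Z(\Ls)=0$. Perfectness forces $\Ls+[\Ls,\La]=\Lg$, hence $[\Ls,\La]=\La$ and no $V_i$ is a trivial $\Ls$-module; consequently $Z(\Lg)\cap\La=\La^\Ls=0$, so $a\colon\Lg\to\La$ is a well-defined linear map. Applying the Lie-morphism identity $L([x,y])=[L(x),L(y)]$ together with $[\La,\La]=0$ yields
\[
\ad_\Lg\bigl(a([x,y])\bigr)=\ad_\Lg\bigl([a(x),a(y)]\bigr)=0,
\]
so $a([x,y])\in Z(\Lg)\cap\La=0$. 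Since $\Lg$ is perfect, $a$ vanishes identically, hence $L\equiv 0$ and the post-Lie product is trivial.

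The delicate point I anticipate is confirming that the isomorphism $\psi$ of Proposition~\ref{5.11} is Lie-algebraic rather than merely a linear bijection; this amounts to unpacking the bracket of two derivations of the form $(d,0,0)$, which is already essentially carried out inside the proof of Lemma~\ref{partder}. Once this and the (easy) non-triviality of each $V_i$ are in hand, the remainder is a clean assembly of the previously established machinery.
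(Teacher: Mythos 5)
Your proof is correct, but it follows a genuinely different route from the paper's. The paper argues entirely inside $\Der(\Lg,\La)$: by Corollary \ref{5.5} every $L(x)$ lies there, by Lemma \ref{partder} this algebra is $Z^1(\Ls,\La)\rtimes\Der_\Ls(\La)$, which is solvable once Schur's lemma makes $\Der_\Ls(\La)$ abelian; a morphism from the semisimple $\Ls$ into a solvable algebra vanishes, so $\Ls\subseteq\ker(L)$, and Lemma \ref{ideal} then forces $\ker(L)=\Lg$. You instead pass to cohomology: Proposition \ref{5.11} plus the bracket computation from Lemma \ref{partder} shows $H^1(\Lg,\Lg)\cong\End_\Ls(\La)\cong\C^k$ is abelian, so perfectness of $\Lg$ forces every $L(x)$ to be inner; Corollary \ref{5.5} and $Z(\Ls)=0$ then pin down $L(x)=\ad_\Lg(a(x))$ with $a(x)\in\La$ (your verification that no irreducible summand is trivial, hence $\La^\Ls=0$ and $a$ is well defined, is the right care to take), and $[\La,\La]=0$ together with $[\Lg,\Lg]=\Lg$ kills $a$. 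Your worry about $\psi$ being a morphism of Lie algebras is legitimate but resolved exactly as you say: $[(d,0,0),(d',0,0)]=([d,d'],0,0)$ is already computed in the proof of Lemma \ref{partder}. The trade-off: your argument invokes the full strength of Proposition \ref{5.11} (hence Whitehead's first lemma) and needs perfectness twice, whereas the paper's version uses only the solvability of $\Der(\Lg,\La)$ and the purely structural Lemma \ref{ideal}; on the other hand, your route yields the intermediate fact that all $L(x)$ are inner of the form $\ad_\Lg(a(x))$ with $a(x)\in\La$, which is slightly more explicit information than the paper extracts.
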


\begin{proof} Let $x\cdot y$ be a commutative post-Lie algebra structure on $\Lg$ with left multiplication
maps $L(x)$. As $\Lg\cdot \Lg\subseteq \La$ by Corollary $\ref{5.5}$, we have that $L(x)\in \Der(\Lg,\La)$. 
Moreover, $\Der_\Ls(\La)=\Hom_\Ls(\La,\La)$ is an abelian Lie algebra, because $\La$ decomposes into
pairwise nonisomorphic irreducible $\Ls$--modules. Hence by Lemma $\ref{partder}$, we have that
$\Der(\Lg,\La)$ is a solvable Lie algebra. But then $L:\Lg \rightarrow \Der(\Lg,\La):x \mapsto L(x)$ 
is a Lie algebra morphism with solvable image. Hence $L(\Ls)=0$, and so $\Ls\subseteq \ker(L)$. 
By Lemma~ $\ref{ideal}$ we can deduce that $\ker(L)=\Lg$ and so the post-Lie product is trivial.
\end{proof}

\begin{thm}
Let $\Lg=\La\rtimes \Ls$ be a perfect Lie algebra with an abelian radical $\La$ and without outer derivations. 
Then any commutative post-Lie algebra structure on $\Lg$ is trivial.
\end{thm}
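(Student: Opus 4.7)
The plan is to combine Proposition \ref{5.11} with Proposition \ref{5.3} for a clean reduction to the semisimple case. Since $\Lg$ has no outer derivations, $H^1(\Lg,\Lg) = 0$, and the isomorphism $\psi\colon \End_\Ls(\La)\to H^1(\Lg,\Lg)$ of Proposition \ref{5.11} forces $\End_\Ls(\La) = 0$. The key point is that $\id_\La$ is tautologically an $\Ls$-module endomorphism of $\La$ and is nonzero whenever $\La\neq 0$; hence $\La = 0$ and $\Lg = \Ls$ is semisimple. Proposition \ref{5.3} then yields triviality of any commutative post-Lie algebra structure on $\Lg$.

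An alternative route, which bypasses Proposition \ref{5.11} and is perhaps more illuminating of the structural content, is to argue directly with the left multiplications $L(x)$. Each $L(x)$ is a derivation of $\Lg$, hence inner by hypothesis, so $L(x) = \ad(\mu(x))$ for a well-defined linear map $\mu\colon \Lg \to \Lg$; here one first checks $Z(\Lg) = 0$, which follows from perfectness since $\Ls\cdot \La = \La$ forces $\La^\Ls = 0$ by Weyl's theorem, and a direct computation with the semidirect-product bracket gives $Z(\Lg) = \La^\Ls$. Next, Corollary \ref{5.5} gives $L(x)(\Lg)\subseteq \La$; writing $\mu(x) = (a,s)$ and requiring the $\Ls$-component of $[\mu(x),(0,t)]$ to vanish forces $[s,t] = 0$ for all $t \in \Ls$, hence $s = 0$ and $\mu(x) \in \La$. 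Since $\La$ is abelian, this gives $L(x)|_\La = 0$, and commutativity $L(x)(y) = L(y)(x)$ then forces $L|_\La = 0$; so $\mu$ factors through $\Ls$. Finally the identity $L([x,y]) = [L(x),L(y)]$ collapses to $\mu([x,y]) \in Z(\Lg) = 0$ because $\mu$ lands in the abelian $\La$, and perfectness of $\Lg$ yields $\mu \equiv 0$.

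Neither route presents a serious obstacle, since Propositions \ref{5.11} and \ref{5.3} were set up precisely to yield this kind of reduction; the only mild technical point is the verification $Z(\Lg) = 0$ in the direct approach, which is where perfectness of $\Lg$ is essentially used.
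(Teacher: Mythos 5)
Both of your routes are correct, and both differ from the paper's own argument. The paper uses the no-outer-derivations hypothesis to show that every $d\in\Der_{\Ls}(\La)$ is an inner derivation of $\La$ (hence zero, $\La$ being abelian), deduces from Lemma \ref{partder} that $\Der(\Lg,\La)$ is solvable, concludes $L(\Ls)=0$ since $L$ maps into $\Der(\Lg,\La)$ by Corollary \ref{5.5}, and finishes with Lemma \ref{ideal}. Your first route is shorter and in fact sharper than the paper's: since $\id_{\La}\in\End_{\Ls}(\La)$ is nonzero whenever $\La\neq 0$, Proposition \ref{5.11} shows that the hypotheses force $\La=0$, so the theorem is non-vacuous only for semisimple $\Lg$, where Proposition \ref{5.3} applies. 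This observation is not made in the paper (whose proof is phrased so as to suggest a generalization to non-abelian nilpotent radicals, where $\ad_{\Ln}(\La)$ need not vanish and the statement is not vacuous), and it is consistent with the remark following the conjecture that non-semisimple perfect Lie algebras without outer derivations are hard to find -- by your argument such an algebra can never have abelian radical. Your second route is also sound: the identification $Z(\Lg)=\La^{\Ls}$ and its vanishing via Weyl's theorem and perfectness are correct, as is the chain $\mu(\Lg)\subseteq\La$, $L|_{\La}=0$ by commutativity, and $\mu([x,y])\in Z(\Lg)=0$ from $L([x,y])=[L(x),L(y)]=\ad([\mu(x),\mu(y)])=0$; it gives a self-contained proof that does not invoke Lemmas \ref{partder} and \ref{ideal} at the cost of the explicit bracket computations. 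Either version is acceptable.
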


\begin{proof}
We have that $\Lg=\La\rtimes \Ls$, with $\La$ abelian and $\Ls$ semisimple. Again, we consider a 
commutative post-Lie structure on $\Lg$ and let $L:\Lg \rightarrow \Der(\Lg,\La)$ denote the left 
multiplication, using Corollary $\ref{5.5}$. We claim that $\Der_\Ls(\La)$ is a nilpotent Lie algebra. 
Let $d\in \Der_\Ls(\La)$. Then the triple $(d,0,0)$ determines a derivation of $\Lg$, which is inner 
by assumption. Hence 
\[ 
(d,0,0) = \ad_\Lg(b,x)
\]
for some $b\in \La$ and some $x\in \Ls$. As in the proof of Lemma $\ref{innerder}$ we must have that $x=0$. 
But then
\[ 
(d,0,0)(a,0)=(d(a),0)=\ad_\Lg(b,0)(a,0)= ([b,a],0)= ( \ad_\La(b)(a), 0).  
\]
Hence $d$ is an inner derivation of $\La$. It follows that $\Der_\Ls(\La)$ is a Lie subalgebra of the
nilpotent Lie algebra $\ad_\Ln(\La)$ and by Lemma $\ref{partder}$ we have that $\Der(\Lg,\La)$ is a 
solvable Lie algebra. Hence $L(\Ls)=0$ and as in the previous theorem, we can conclude that $L(\Lg)=0$.
\end{proof}

In view of these results it is natural to formulate the following conjecture:

\begin{con}
Any commutative post-Lie algebra structure on a perfect Lie algebra $\Lg$ is trivial.
\end{con}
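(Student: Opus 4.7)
The plan is to extend the approach of the two preceding theorems. First, by the reduction proposition we may assume $\Lg=\La\rtimes\Ls$ with $\La$ abelian, since otherwise we pass to $\Lg/[\Ln,\Ln]$. By Corollary~\ref{5.5} the left multiplication $L$ takes values in $\Der(\Lg,\La)$, and by Lemma~\ref{partder} this decomposes as $Z^1(\Ls,\La)\rtimes\End_\Ls(\La)$; Whitehead's first lemma gives $Z^1(\Ls,\La)=B^1(\Ls,\La)$, an abelian ideal. Writing $\La=\bigoplus_i V_i^{m_i}$ as a sum of isotypic components, Schur yields $\End_\Ls(\La)\cong\prod_i \mathfrak{gl}_{m_i}(\C)$. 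The already proved cases correspond to $\End_\Ls(\La)$ being abelian (all $m_i=1$) or forced into nilpotent inner derivations; the open case is when some $m_i\ge 2$ and $\End_\Ls(\La)$ has a nonzero semisimple Levi part $\prod_i \mathfrak{sl}_{m_i}$.

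Let $\pi\colon\Der(\Lg,\La)\to\End_\Ls(\La)$ be the projection modulo $B^1(\Ls,\La)$ and set $\alpha=\pi\circ L|_\Ls$. If $\alpha=0$, then $L(\Ls)\subseteq B^1(\Ls,\La)$ is abelian, so perfectness of $\Ls$ forces $L(\Ls)=0$, and Lemma~\ref{ideal} gives $L=0$. Since $\Lg$ is perfect, the image of $\alpha$ already lies in $\prod_i\mathfrak{sl}_{m_i}$, so the whole problem reduces to ruling out any nonzero Lie algebra morphism $\alpha\colon\Ls\to\prod_i\mathfrak{sl}_{m_i}$ arising from a commutative post-Lie structure on $\Lg$.

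To attack this I would exploit the rigidity coming from commutativity. Writing $S(x,y):=x\cdot y\in\La$ for $x,y\in\Ls$, axiom \eqref{com6} asserts that for each fixed $x$ the map $y\mapsto S(x,y)$ is a $1$-cocycle of $\Ls$ with values in $\La$, hence by Whitehead a coboundary: $S(x,y)=-\phi(y)\beta(x)$ for some linear $\beta\colon\Ls\to\La$. The symmetry $S(x,y)=S(y,x)$ then becomes
\begin{align*}
\phi(x)\beta(y) &= \phi(y)\beta(x) \quad \text{for all } x,y\in\Ls,
\end{align*}
while axiom \eqref{com5} yields $\beta([x,y])=\alpha(x)\beta(y)-\alpha(y)\beta(x)$ modulo $\La^{\Ls}$. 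The mixed post-Lie identity applied to arguments in both $\Ls$ and $\La$ produces a further compatibility between $\alpha(x)$ and the $\phi$-action on $\beta(\Ls)$, which combined with the morphism condition $[\alpha(x),\alpha(y)]=\alpha([x,y])$ is the rigid system one would try to use to force $\alpha=0$.

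The main obstacle is exactly this last step: a representation-theoretic proof that the above system admits no nonzero solution in the reductive case. In the already treated cases $\Der(\Lg,\La)$ is solvable, so $\alpha=0$ is immediate from $\Ls$ being semisimple; in general this shortcut disappears and one must argue block-by-block on each isotypic $V_i^{m_i}$, controlling how an $\Ls$-morphism $\alpha\colon\Ls\to\mathfrak{sl}_{m_i}(\C)$ can interact with the intertwiner action $\phi$ under the $\beta$-constraints above. Making such an argument uniform over all perfect $\Lg$ seems to require substantial new input beyond the tools used in this section, and is the reason the statement remains a conjecture.
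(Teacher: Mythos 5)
Your proposal is not a proof: after reducing to $\Lg=\La\rtimes\Ls$ with abelian radical and to a nonzero Lie algebra morphism $\alpha=\pi\circ L|_{\Ls}\colon \Ls\ra\prod_i\mathfrak{sl}_{m_i}(\C)$, you state explicitly that you cannot rule such an $\alpha$ out and that this is ``the reason the statement remains a conjecture''. But that last step is precisely the content of the statement in the only case not already covered by the two preceding theorems (all $m_i=1$, or no outer derivations). What you have is a correct organisation of the known partial results, together with a plausible list of constraints coming from \eqref{com5}, \eqref{com6} and Whitehead's first lemma, but the decisive argument is missing, so the gap is the whole theorem in the hard case.

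The paper does close this gap, and it does so by abandoning the cohomological and isotypic analysis altogether. Its proof is an induction on $\dim(\Ln)$, where $\Ln=\rad(\Lg)$ is the (nil)radical: one restricts the left multiplications to a representation $\rho\colon\Lg\ra\mathfrak{gl}(\Ln)$, uses that $\rho(\Ln)$ consists of nilpotent operators to produce a nonzero vector annihilated by $\Ln$ and hence a nonzero ideal $\La=\ker(\rho)\cap\Ln$ of $\Lg$; a short computation with \eqref{com6} then shows $\La\cdot\Lg=0$ (first $\La\cdot\Lg\subseteq Z(\Ln)$, then $\La\cdot[\Lg,\Lg]=0$ and perfectness of $\Lg$), so the product descends to the perfect algebra $\Lg/\La$, whose radical has strictly smaller dimension, and vanishes there by induction; finally $\im(L)$ is abelian, which for perfect $\Lg$ forces $L=0$. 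Note that this route never needs the multiplicities $m_i$, never needs $\Der_{\Ls}(\La)$ to be abelian or nilpotent, and never confronts the morphism $\alpha$ on which your argument founders: the nilpotency of $\rho(\Ln)$ plus induction on the radical replaces all of that. If you want to salvage your setup, the lesson is that the ideal $\ker(L)\cap\Ln$ is the right object to induct on, rather than the semisimple part of $\End_{\Ls}(\La)$.
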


Although there is strong evidence in favor of the conjecture, there might be perhaps 
a counterexample in higher dimensions. Cohomological questions on perfect Lie algebras 
are sometimes quite difficult, and it is, for example, already 
not easy to find a perfect Lie algebra without center and without outer derivations, which is not 
semisimple \cite{BEN}.

\section{Commutative structures in low dimensions}

The classification of commutative post-Lie algebra structures on $\Lg$, where $\Lg$ is abelian
corresponds to isomorphism classes of commutative, associative algebras. There are classifications
available in low dimensions over algebraically closed fields, see \cite{BU42}, \cite{POO}. Hence we may assume 
that $\Lg$ is a non-abelian complex Lie algebra.
We may also exclude semisimple Lie algebras, which admit only trivial structures.
The classification in dimension $2$ is as follows, see section $3$ in \cite{BU41}:

\begin{prop}
Let $\Lr_2(\C)$ be the $2$-dimensional non-abelian Lie algebra with Lie bracket $[e_1,e_2]=e_1$.
Every commutative post-Lie algebra structure on $\Lr_2(\C)$ is isomorphic to one of the
structures $A_1$, $A_2$, $A_3$ given by the
left multiplication operators $L(e_i)$ as follows: \\[0.2cm]
\begin{itemize}
\item[(1)] $L(e_1)=0$, $L(e_2)=0$. \\[0.2cm]
\item[(2)] $L(e_1)=0$, $L(e_2)=\begin{pmatrix} 0 & 1  \\ 0 & 0 \end{pmatrix}$. \\[0.2cm]
\item[(3)] $L(e_1)=\begin{pmatrix} 0 & -1 \\ 0 & 0 \end{pmatrix}$,
$L(e_2)=\begin{pmatrix} -1 & 0 \\ 0 & 0 \end{pmatrix}$. \\[0.2cm]
\end{itemize}
Here $A_2$ is an LR-algebra and a pre-Lie algebra, whereas $A_3$ is neither LR nor pre-Lie.
\end{prop}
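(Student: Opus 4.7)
The plan is to parametrize $L(e_1)$ and $L(e_2)$ as $2\times 2$ matrices, extract constraints from the three defining identities \eqref{com4}--\eqref{com6}, and then use the automorphism group of $\Lr_2(\C)$ to reduce to normal forms. The point is that, once the axioms are imposed, very few parameters survive, and the remaining ones are immediately normalized by automorphisms.

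I begin by writing $L(e_1)=\begin{pmatrix} a & b \\ c & d \end{pmatrix}$ and $L(e_2)=\begin{pmatrix} p & q \\ r & s \end{pmatrix}$ in the basis $\{e_1,e_2\}$. Commutativity \eqref{com4} says $L(e_1)e_2=L(e_2)e_1$, which gives $p=b$ and $r=d$, eliminating two parameters. Next I would apply the derivation identity \eqref{com6} to $(y,z)=(e_1,e_2)$: each $L(e_i)$ must satisfy $L(e_i)e_1=[L(e_i)e_1,e_2]+[e_1,L(e_i)e_2]$. Expanding the right-hand side using $[e_1,e_2]=e_1$ shows that all $e_2$-components on the right are killed, which forces $c=d=s=0$. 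In particular, both matrices have vanishing second row.

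Then I would apply the representation axiom \eqref{com5} to the pair $(e_1,e_2)$, namely $L(e_1)=L([e_1,e_2])=[L(e_1),L(e_2)]$. With the reduced matrices $L(e_1)=\begin{pmatrix} a & b \\ 0 & 0 \end{pmatrix}$ and $L(e_2)=\begin{pmatrix} b & q \\ 0 & 0 \end{pmatrix}$, the commutator equals $\begin{pmatrix} 0 & aq-b^2 \\ 0 & 0 \end{pmatrix}$. Equating with $L(e_1)$ yields $a=0$ together with $b^2+b=0$, so $b\in\{0,-1\}$, while $q$ remains free. This splits the classification into two explicit branches parametrized by $q$.

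Finally I would normalize using $\Aut(\Lr_2(\C))$, whose generic element has the form $e_1\mapsto\alpha e_1$, $e_2\mapsto\beta e_1+e_2$ with $\alpha\in\C^\times$ and $\beta\in\C$ (these are the invertible linear maps preserving $[e_1,e_2]=e_1$). Transporting the post-Lie product along such a map, I would compute how the single surviving parameter $q$ changes. For $b=0$ the new coefficient of $e_1'$ in $e_2'\cdot e_2'$ equals $q/\alpha$, so either $q=0$, giving structure (1), or we rescale with $\alpha=q$ to obtain structure (2). For $b=-1$ the analogous coefficient equals $(q-2\beta)/\alpha$, so the shift $\beta=q/2$ absorbs $q$ entirely and yields structure (3). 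The LR/pre-Lie remark at the end is a direct check on the listed matrices. The only real obstacle is the compact matrix computation identifying $[L(e_1),L(e_2)]$ with $L(e_1)$; everything else is linear bookkeeping.
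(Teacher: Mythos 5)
Your proof is correct: the constraints you extract from \eqref{com4}--\eqref{com6} do reduce the structures to $L(e_1)=\left(\begin{smallmatrix}0&b\\0&0\end{smallmatrix}\right)$, $L(e_2)=\left(\begin{smallmatrix}b&q\\0&0\end{smallmatrix}\right)$ with $b\in\{0,-1\}$ and $q$ free, the automorphism group is as you describe, and the normalization of $q$ (rescaling for $b=0$, shifting by $\beta$ for $b=-1$) gives exactly $A_1,A_2,A_3$. The paper itself defers this proposition to \cite{BU41}, but your parametrize-then-normalize computation is precisely the method the paper uses for its analogous dimension-three classifications, so this is essentially the same approach.
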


In dimension $3$, the classification of commutative post-Lie algebra structures on $\Lg$ 
is already rather long, in particular for the infinite family of solvable, non-nilpotent Lie
algebras $\Lr_{3,\la}(\C)$ with brackets $[e_1,e_2]=e_2$ and $[e_1,e_3]=\la e_3$. Therefore we will
only give the classification here for the remaining Lie algebras $\Lg$ of dimension $3$. 
We already have discussed the cases $\Ls\Ll_2(\C)$ and $\C^3$. Hence there only Lie algebras
to consider are $\Lr_{3,1}(\C)$ and $\Ln_3(\C)$:

\begin{prop}
Let $\Lr_{3,1}(\C)$ be the $3$-dimensional solvable non-nilpotent Lie algebra with Lie brackets 
$[e_1,e_2]=e_2$ and $[e_1,e_3]=e_2+e_3$. Every commutative post-Lie algebra structure on 
$\Lr_{3,1}(\C)$ is isomorphic to one of the structures $B_1, B_2$, $B_3$, $B_4$ given by the
left multiplication operators $L(e_i)$ as follows: \\[0.2cm]
\begin{itemize}
\item[(1)] $L(e_1)=0$, $L(e_2)=0$, $L(e_3)=0$. \\[0.2cm]
\item[(2)] $L(e_1)=\begin{pmatrix} 0 & 0 & 0 \\ 1 & 0 & 0 \\ 0 & 0 & 0 \end{pmatrix}$,
$L(e_2)=0$, $L(e_3)=0$. \\[0.2cm]
\item[(3)] $L(e_1)=\begin{pmatrix} 0 & 0 & 0 \\ 0 & 0 & 0 \\ 1 & 0 & 0 \end{pmatrix}$,
$L(e_2)=0$, $L(e_3)=0$. \\[0.2cm]
\item[(4)] $L(e_1)=\begin{pmatrix} 0 & 0 & 0 \\ 0 & 1 & 1 \\ 0 & 0 & 1 \end{pmatrix}$,
$L(e_2)=\begin{pmatrix} 0 & 0 & 0 \\ 1 & 0 & 0 \\ 0 & 0 & 0 \end{pmatrix}$, 
$L(e_2)=\begin{pmatrix} 0 & 0 & 0 \\ 1 & 0 & 0 \\ 1 & 0 & 0 \end{pmatrix}$. \\[0.2cm]
\end{itemize}
\end{prop}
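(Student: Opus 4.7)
The plan is to reduce the classification to a finite-dimensional algebraic system and then carry out the orbit analysis under $\Aut(\Lr_{3,1}(\C))$. First I would compute the derivation algebra of $\Lr_{3,1}(\C)$. A direct check against the brackets $[e_1,e_2]=e_2$ and $[e_1,e_3]=e_2+e_3$ shows that every $D\in\Der(\Lr_{3,1}(\C))$ has matrix shape
\[
D=\begin{pmatrix} 0 & 0 & 0 \\ * & \lambda & * \\ * & 0 & \lambda \end{pmatrix}
\]
in the basis $(e_1,e_2,e_3)$; in particular $D(\Lg)\subseteq\langle e_2,e_3\rangle$ and $\Der(\Lr_{3,1}(\C))$ is four-dimensional. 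Writing each $L(e_i)$ in this form gives twelve scalar parameters.

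Next I would impose the two remaining defining conditions. The symmetry $L(e_i)(e_j)=L(e_j)(e_i)$ produces five linear relations among the twelve parameters, leaving six. The Lie algebra homomorphism condition $L([e_i,e_j])=[L(e_i),L(e_j)]$ then yields the three matrix identities $L(e_2)=[L(e_1),L(e_2)]$, $L(e_2)+L(e_3)=[L(e_1),L(e_3)]$, and $[L(e_2),L(e_3)]=0$. Solving these is mostly linear; the only nonlinear equation that appears is $\gamma=\gamma^2$, where $\gamma$ denotes the common diagonal value in $L(e_1)$. This dichotomy splits the solution set into two families. When $\gamma=0$, the relations force $L(e_2)=L(e_3)=0$, and $L(e_1)$ has two free parameters $\alpha,\beta$ encoded by $e_1\cdot e_1=\alpha e_2+\beta e_3$. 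When $\gamma=1$, the relations force $L(e_2)$ and $L(e_3)$ to be exactly the two nonzero matrices appearing in $B_4$, while $L(e_1)$ again carries two free parameters $\alpha,\beta$ determining $e_1\cdot e_1$.

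Finally I would classify up to $\Aut(\Lr_{3,1}(\C))$. Preservation of $[\Lg,\Lg]=\langle e_2,e_3\rangle$ together with the bracket relations forces any automorphism to have the form $\phi(e_1)=e_1+p_{21}e_2+p_{31}e_3$, $\phi(e_2)=p_{22}e_2$, $\phi(e_3)=p_{23}e_2+p_{22}e_3$ with $p_{22}\in\C^*$. Pushing forward gives explicit transformation laws for the pair $(\alpha,\beta)$ in each family. In the degenerate family the orbits of $(\alpha,\beta)$ collapse to three: $(0,0)$, the locus $\beta=0$, $\alpha\neq 0$ (normalized to $(1,0)$), and the locus $\beta\neq 0$ (normalized to $(0,1)$ by using $p_{23}$ to kill $\alpha$), producing $B_1$, $B_2$, $B_3$. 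In the non-degenerate family one uses $p_{21}$ and $p_{31}$ to kill both $\alpha$ and $\beta$, producing the single normal form $B_4$. Non-isomorphism of $B_2$ and $B_3$ is an invariant of whether $\im L(e_1)$ lies inside the eigenspace $\ker(\ad(e_1)-\id)=\langle e_2\rangle$ of $\Lg$.

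The main obstacle is the bookkeeping of the orbit analysis: although the underlying algebraic system is almost entirely linear after the single scalar dichotomy $\gamma\in\{0,1\}$, one must carefully match the six post-Lie parameters against the four automorphism parameters and verify that the stated four normal forms form a complete irredundant list. There is no serious geometric or cohomological difficulty, only a careful case distinction between the two families and between the subcases $\beta=0$ and $\beta\neq 0$ inside the degenerate family.
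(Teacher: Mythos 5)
Your proposal is correct and follows essentially the same route as the paper: reduce to a small parametrized family by combining the derivation property of the maps $L(e_i)$ with commutativity and the homomorphism condition $L([x,y])=[L(x),L(y)]$ (yielding the dichotomy $\gamma\in\{0,1\}$ and forcing $L(e_2),L(e_3)$ in each case), then normalize the remaining two parameters of $L(e_1)$ under $\Aut(\Lr_{3,1}(\C))$, exactly as in the paper's ``short computation'' plus orbit analysis. The only quibbles are trivial bookkeeping: commutativity actually imposes six (not five) independent linear relations on the twelve derivation parameters, still leaving six, and besides $\gamma=\gamma^2$ there is one further quadratic relation (of the form $\gamma+r=2\gamma r$) that becomes linear once $\gamma$ is fixed; neither affects the classification, which agrees with the paper's list $B_1,\dots,B_4$.
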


\begin{proof}
A short computation shows that all structures, regardless of isomorphism, can be represented by
\[
L(e_1)=\begin{pmatrix} 0 & 0 & 0 \\ \al & \ga & \frac{\ga}{2\ga-1} \\ \be & 0 & \ga \end{pmatrix},\;
L(e_2)=\begin{pmatrix} 0 & 0 & 0 \\ \ga & 0 & 0 \\ 0 & 0 & 0 \end{pmatrix},\;
L(e_3)=\begin{pmatrix} 0 & 0 & 0 \\ \frac{\ga}{2\ga-1} & 0 & 0 \\ \ga & 0 & 0 \end{pmatrix}.
\]
subject to the condition $\ga(\ga-1)=0$. In particular, we have $2\ga-1\neq 0$.
Let us denote these structures by $B(\al,\be,\ga)$. Using the automorphisms 
\[
\Aut (\Lr_{3,1}(\C))=\left\{ \begin{pmatrix} 1 & 0 & 0 \\ \phi_2 & \phi_5 & \phi_8 \\ \phi_3 & 0 & \phi_5 
\end{pmatrix} \mid \phi_5\neq 0 \right\}
\]
it is easy to see that we obtain $4$ non-isomorphic commutative post-Lie structures
$B(0,0,0)$, $B(1,0,0)$, $B(0,1,0)$, $B(0,0,1)$, which are $B_1, B_2$, $B_3$, $B_4$.
\end{proof}

\begin{prop}\label{6.3}
Let $\Lh_1$ be the $3$-dimensional Heisenberg Lie algebra with Lie bracket $[e_1,e_2]=e_3$.
Every commutative post-Lie algebra structure on $\Lh_1$ is isomorphic to one of the
structures $C_1, C_2(\mu)$, $\mu\in \C$, $C_3$, $C_4$ given by the
left multiplication operators $L(e_i)$ as follows: \\[0.2cm]
\begin{itemize}
\item[(1)] $L(e_1)=0$, $L(e_2)=0$, $L(e_3)=0$. \\[0.2cm]
\item[(2)] $L(e_1)=\begin{pmatrix} 0 & 0 & 0 \\ 1 & 0 & 0 \\ 0 & \mu & 0 \end{pmatrix}$,
$L(e_2)=\begin{pmatrix} 0 & 0 & 0 \\ 0 & 0 & 0 \\ \mu & 0 & 0 \end{pmatrix}$, $L(e_3)=0$. \\[0.2cm]
\item[(3)] $L(e_1)=\begin{pmatrix} 0 & 0 & 0 \\ 0 & 0 & 0 \\ 1 & 0 & 0 \end{pmatrix}$,
$L(e_2)=0$, $L(e_3)=0$. \\[0.2cm]
\item[(4)] $L(e_1)=\begin{pmatrix} 0 & 0 & 0 \\ 0 & 0 & 0 \\ 1 & 0 & 0 \end{pmatrix}$,
$L(e_2)=\begin{pmatrix} 0 & 0 & 0 \\ 0 & 0 & 0 \\ 0 & 1 & 0 \end{pmatrix}$, $L(e_3)=0$.\\[0.2cm]
\end{itemize}
We have $C_2(\mu)\cong C_2(\nu)$ if and only if $\mu=\nu$.
\end{prop}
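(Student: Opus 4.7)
The approach parallels the classifications in the preceding propositions. A routine computation first gives
\[
\Der(\Lh_1)=\left\{\begin{pmatrix} a & b & 0 \\ c & d & 0 \\ u & v & a+d \end{pmatrix} \mid a,b,c,d,u,v\in\C\right\},
\]
a six-parameter family. Axiom~\eqref{com6} applied to $y=e_1$, $z=e_2$ forces each $L(e_i)$ into this shape, giving an a priori eighteen-parameter family of triples $(L(e_1),L(e_2),L(e_3))$.

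Commutativity~\eqref{com4} translates to the statement that the $j$-th column of $L(e_i)$ equals the $i$-th column of $L(e_j)$. Because the third column of every element of $\Der(\Lh_1)$ is a scalar multiple of $e_3$, commutativity forces
\[
L(e_3)=\begin{pmatrix} 0 & 0 & 0 \\ 0 & 0 & 0 \\ t_1 & t_2 & 0 \end{pmatrix},
\]
with $t_i=\tr A_i$, where $A_i$ denotes the top-left $2\times 2$ block of $L(e_i)$, and it identifies the second column of $L(e_1)$ with the first column of $L(e_2)$. The morphism condition $L(e_3)=[L(e_1),L(e_2)]$, which is axiom~\eqref{com5} applied to $z=e_3$, then splits into $[A_1,A_2]=0$ on the top-left block plus two further scalar equations on the bottom row relating the bottom-row entries of $L(e_1)$ and $L(e_2)$ to $t_1$ and $t_2$. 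I would solve this polynomial system by case analysis on which of the parameters vanish.

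The last step is to normalise the surviving solutions by the action of $\Aut(\Lh_1)$, whose elements have the block form $\begin{pmatrix} M & 0 \\ w & \det M \end{pmatrix}$ with $M\in \mathrm{GL}_2(\C)$ and $w\in\C^2$; conjugating the triples by such matrices should collapse every orbit onto one of the four canonical forms. To show $C_2(\mu)\not\cong C_2(\nu)$ for $\mu\neq\nu$, I would use the identity $x\cdot(x\cdot x)=\mu\,[x,x\cdot x]$, which in $C_2(\mu)$ holds for every $x\in\Lh_1$: writing $x=\alpha e_1+\beta e_2+\gamma e_3$ one computes $x\cdot x=\alpha^2 e_2+2\alpha\beta\mu\, e_3$ and then $x\cdot(x\cdot x)=\alpha^3\mu\, e_3=\mu[x,x\cdot x]$. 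Since $[e_1,e_1\cdot e_1]=e_3\neq 0$, the scalar $\mu$ is an isomorphism invariant of the commutative post-Lie structure.

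The principal obstacle is the bookkeeping in the middle step: the polynomial system $[A_1,A_2]=0$ together with the bottom-row equations produces several branches, and one must track how the remaining freedom in $\Aut(\Lh_1)$ moves between them to arrive at exactly the four normal forms $C_1$, $C_2(\mu)$, $C_3$, $C_4$. The calculations are elementary but delicate, much as in the preceding classification for $\Lr_{3,1}(\C)$.
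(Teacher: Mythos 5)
Your overall strategy coincides with the paper's: extract a polynomial system from the three axioms applied to the triple $(L(e_1),L(e_2),L(e_3))$ of derivations, then normalise under $\Aut(\Lh_1)$ (whose block description you give correctly). The difficulty is that the system you actually write down is incomplete, and solving it would not produce the stated list. Axiom~\eqref{com5} says that $L$ is a Lie algebra homomorphism, which for $\Lh_1$ amounts to \emph{three} operator identities: not only $L(e_3)=[L(e_1),L(e_2)]$, but also $[L(e_1),L(e_3)]=0$ and $[L(e_2),L(e_3)]=0$, coming from $[e_1,e_3]=[e_2,e_3]=0$. You impose only the first. The other two are not consequences of the conditions you list: the triple
\[
L(e_1)=\begin{pmatrix} 1&0&0\\0&1&0\\0&1&2\end{pmatrix},\qquad
L(e_2)=\begin{pmatrix} 0&0&0\\1&0&0\\1&0&0\end{pmatrix},\qquad
L(e_3)=\begin{pmatrix} 0&0&0\\0&0&0\\2&0&0\end{pmatrix}
\]
consists of derivations, satisfies all the column-matching conditions from \eqref{com4} and satisfies $L(e_3)=[L(e_1),L(e_2)]$, yet $[L(e_1),L(e_3)]\neq 0$, so it is not a post-Lie structure. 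The two omitted identities are precisely what forces $\tr A_1=\tr A_2=0$ (in your notation for the top-left blocks), hence $L(e_3)=0$ and the nilpotency of all $L(x)$; this is the normal form the paper's proof starts from, and it is also what Proposition~\ref{7.1} later relies on. Without them your case analysis runs over a strictly larger variety containing non-nilpotent spurious solutions such as the one above, which is isomorphic to none of $C_1,\dots,C_4$, so the final classification would come out wrong. Once the two identities are added, the system reduces to the paper's five equations in $\al,\be,\ga,\de,\ep,\ka,\la$ and the remainder of your plan goes through.

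Two smaller remarks. First, $L(e_3)=[L(e_1),L(e_2)]$ is \eqref{com5} applied with $x=e_1$, $y=e_2$ and \emph{all} $z$, not ``applied to $z=e_3$''; the same misreading of \eqref{com5} is probably what led you to overlook the instances with $y=e_3$. Second, your invariant argument for the last claim is correct and is a genuine addition: the identity $x\cdot(x\cdot x)=\mu\,[x,x\cdot x]$, valid for all $x$ in $C_2(\mu)$, together with $[e_1,e_1\cdot e_1]=e_3\neq 0$ and the fact that an isomorphism of structures is simultaneously an automorphism of $\Lh_1$, does show that $C_2(\mu)\cong C_2(\nu)$ implies $\mu=\nu$; the paper states this equivalence without proof.
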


\begin{proof}
A short computation shows that all structures, regardless of isomorphism, can be represented by
\[
L(e_1)=\begin{pmatrix} \al & \de & 0 \\ \be & -\al & 0 \\ \ga & \ep & 0 \end{pmatrix},\;
L(e_2)=\begin{pmatrix} \de & \ka & 0 \\ -\al & -\de & 0 \\ \ep & \la & 0 \end{pmatrix},\;
L(e_3)=0,
\]
subject to the polynomial conditions
\begin{align*}
\al\de+\be\ka & = 0,\\
\al\ka-\de^2 & = 0, \\
\al^2+\be\de & = 0,\\
\ga\de-2\al\ep-\be\la & = 0,\\
\al\la+\ga\ka-2\de\ep & =0.
\end{align*}
Note that the first three equations ensure that all $L(x)$ are nilpotent. In case that $\al, \be$ or $\ka$
is nonzero, using the automorphisms 
\[
\Aut (\Lh_1(\C))=\left\{ \begin{pmatrix} \phi_1 & \phi_4 & 0 \\ \phi_2 & \phi_5 & 0 \\ \phi_3 & \phi_6 & 
\phi_1\phi_5 -\phi_2\phi_4 \end{pmatrix} \mid \phi_1\phi_5-\phi_2\phi_4 \neq 0 \right\}
\]
it is straightforward to see that all structures are isomorphic to some $C_2(\mu)$.
Otherwise we have $\al=\be=\ka=0$, and products $e_1\cdot e_1=\ga e_3$, $e_1\cdot e_2=\ep e_3$ and
$e_2\cdot e_2=\la e_3$. Up to post-Lie structure isomorphism, we obtain three different structures,
represented by $(\ga,\ep,\la)=(1,0,0), (1,0,1),(0,0,0)$.
\end{proof}

\section{Commutative post-Lie algebra structures on nilpotent Lie algebras}

As we have seen in Proposition $\ref{6.3}$, the left multiplication operators $L(x)$ are nilpotent
for any commutative post-Lie algebra structure on the Heisenberg Lie algebra $\Lh_1$.
We can extend this observation to all non-abelian $2$-generated nilpotent Lie algebras. This includes
filiform nilpotent Lie algebras.

\begin{prop}\label{7.1}
Let $\Ln$ be a non-abelian nilpotent Lie algebra which is generated by two elements. Then for any
commutative post-Lie algebra structure on $\Ln$ the left multiplication operators $L(x)$ are nilpotent.
\end{prop}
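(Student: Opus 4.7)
The plan is to reduce the proposition to the Heisenberg case handled in Proposition~\ref{6.3}. First I will show that the quotient $\Ln/\Ln^2$ is isomorphic to the Heisenberg algebra $\Lh_1$. Indeed, if $a,b$ generate $\Ln$, then $\dim(\Ln/\Ln^1)=2$, and $\Ln^1/\Ln^2$ is spanned by $\overline{[a,b]}$, hence has dimension at most one. The alternative $\Ln^1=\Ln^2$ would force $\Ln^i=\Ln^1$ for all $i$ and so $\Ln^1=0$ by nilpotency, contradicting non-abelianness. Thus $\Ln/\Ln^2$ is 3-dimensional, 2-step nilpotent with 1-dimensional derived subalgebra, i.e., isomorphic to $\Lh_1$.

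Next, since $\Ln^2$ is a characteristic ideal of $\Ln$ (preserved by every derivation, and in particular by every operator $L(x)$, which is a derivation by \eqref{com6}), the commutative post-Lie structure on $\Ln$ descends to a commutative post-Lie structure on $\Ln/\Ln^2\cong\Lh_1$. By Proposition~\ref{6.3}, all left multiplications of any commutative post-Lie structure on $\Lh_1$ are nilpotent. Passing further to the quotient $\Lh_1/[\Lh_1,\Lh_1]=\Ln/\Ln^1$, the induced operators $\bar L(x)$ on $\Ln/\Ln^1$ remain nilpotent, since a quotient of a nilpotent operator on an invariant subspace is nilpotent.

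To conclude, I would invoke the following general lemma: \emph{if $D$ is a derivation of a nilpotent Lie algebra $\Ln$ such that the induced map $\bar D$ on $\Ln/[\Ln,\Ln]$ is nilpotent, then $D$ is nilpotent on $\Ln$.} Applied to $D=L(x)$ (a derivation with $\bar L(x)$ nilpotent by the previous step), this yields the proposition. The lemma itself follows by considering the associated graded Lie algebra $\mathrm{gr}(\Ln)=\bigoplus_i\Ln^i/\Ln^{i+1}$, which is a quotient of the free Lie algebra $F(V)$ on $V:=\Ln/\Ln^1$. The derivation $D$ preserves the lower central series and induces a graded derivation $\mathrm{gr}(D)$ on $\mathrm{gr}(\Ln)$, determined on $V$ by $\bar D$. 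The unique extension of $\bar D$ to a derivation of $F(V)\subseteq T(V)$ acts on $V^{\otimes k}$ as $\sum_{i=1}^{k}I^{\otimes(i-1)}\otimes\bar D\otimes I^{\otimes(k-i)}$; if $\bar D^m=0$, a multinomial expansion shows its $(k(m-1)+1)$-st power vanishes, so it is nilpotent on every graded piece of $F(V)$ and thus on every graded piece of $\mathrm{gr}(\Ln)$. Consequently, $D$ acts nilpotently on each quotient $\Ln^i/\Ln^{i+1}$ and therefore on $\Ln$.

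The principal obstacle is the lemma in the last step: while it is a rather standard fact about derivations of finitely generated nilpotent Lie algebras, its proof requires the explicit combinatorial argument on tensor powers of $\bar D$ sketched above. Everything else reduces cleanly to the already classified Heisenberg case.
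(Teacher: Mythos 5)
Your proof is correct and follows essentially the same route as the paper: pass to $\Ln/\Ln^2\cong\Lh_1$, invoke the nilpotency of left multiplications there (Proposition~\ref{6.3}), descend to $\Ln/\Ln^1$, and conclude via the fact that a derivation of a nilpotent Lie algebra inducing a nilpotent map on the abelianization is itself nilpotent. The only difference is that you supply full justifications for two steps the paper merely asserts --- the identification of $\Ln/\Ln^2$ with the Heisenberg algebra and the final lemma, which you prove correctly via the associated graded algebra as a quotient of the free Lie algebra on $\Ln/\Ln^1$.
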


\begin{proof}
The product on $\Ln$ induces a commutative post-Lie structure on $\Ln/\Ln^2$.
As $\Ln/\Ln^2$ is the $3$-dimensional Heisenberg Lie algebra, we know by the previous discussion that
the induced left multiplication maps are nilpotent. This means that for any $x\in \Ln$, the map $L(x)\in \Der(\Ln)$
is a derivation which induces a nilpotent map on $\Ln/\Ln^2$, and hence also on $\Ln/\Ln^1$, since we have
$$
(\Ln/\Ln^1)/(\Ln/\Ln^2)\cong \Ln^1/\Ln^2.
$$
Since any derivation of $\Ln$ which induces a nilpotent derivation on $\Ln/\Ln^1$ is itself nilpotent,
we can conclude that indeed all maps $L(x)$ are nilpotent.
\end{proof}

\begin{cor}
Let $x\cdot y$ be a commutative post-Lie algebra structure on a complex nilpotent Lie algebra
of dimension $n\le 4$ without abelian factor. Then all left multiplication maps $L(x)$ are nilpotent.
\end{cor}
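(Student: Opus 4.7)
The plan is to reduce the corollary to Proposition~$\ref{7.1}$ by verifying that every non-abelian nilpotent complex Lie algebra $\Ln$ of dimension at most four and without abelian factor is generated by two elements. Once this classification step is in hand, Proposition~$\ref{7.1}$ immediately gives nilpotency of every left multiplication $L(x)$.

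The low-dimensional cases are easy. In dimensions one and two every nilpotent Lie algebra is abelian, so the hypothesis forces $\Ln=0$. In dimension three the only non-abelian nilpotent Lie algebra is the Heisenberg algebra $\Lh_1$, and the relation $[e_1,e_2]=e_3$ shows $\dim(\Lh_1/[\Lh_1,\Lh_1])=2$, so $\Lh_1$ is two-generated. The substantive case is dimension four, which I would handle by setting $m=\dim(\Ln/[\Ln,\Ln])$. Since $\Ln$ is non-abelian and nilpotent one has $m\in\{2,3\}$, because $m=1$ would force $\Ln$ to be one-generated and hence one-dimensional. When $m=2$ the algebra $\Ln$ is by definition two-generated and Proposition~$\ref{7.1}$ applies.

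The only case to rule out is $m=3$ in dimension four. Then $[\Ln,\Ln]$ is one-dimensional; since $\Ln^2=[\Ln,[\Ln,\Ln]]\subseteq[\Ln,\Ln]$ is either the whole one-dimensional space $[\Ln,\Ln]$ or zero, nilpotency forces $\Ln^2=0$, so $\Ln$ is two-step nilpotent with one-dimensional commutator. The bracket then descends to a skew form $B\colon V\times V\to\C$ on the three-dimensional quotient $V=\Ln/[\Ln,\Ln]$, and since $\dim V$ is odd, $B$ has rank at most two and hence a non-trivial radical. Choosing a symplectic basis of $V$ and lifting it to $\Ln$ splits off a one-dimensional central ideal, yielding $\Ln\cong\Lh_1\oplus\C$ and contradicting the absence of an abelian factor. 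This symplectic reduction is the main obstacle: once it is carried out, the only non-trivial cases are $\Lh_1$ in dimension three and the four-dimensional filiform in dimension four, both two-generated, and Proposition~$\ref{7.1}$ completes the proof.
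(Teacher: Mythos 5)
Your proposal is correct and follows exactly the route the paper intends: the corollary is stated without proof immediately after Proposition~\ref{7.1}, the point being that every non-abelian complex nilpotent Lie algebra of dimension at most four without abelian factor (namely $\Lh_1$ and the four-dimensional filiform algebra) is generated by two elements. Your explicit verification of this classification step — in particular the symplectic-form argument splitting off a central line when $\dim(\Ln/[\Ln,\Ln])=3$, which forces $\Ln\cong\Lh_1\oplus\C$ — is sound and simply makes precise what the paper leaves implicit.
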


We cannot extend the result to nilpotent Lie algebras with an abelian factor:

\begin{ex}
Let $\Ln=\Lh_1\oplus \C$ be the direct sum of the Heisenberg Lie algebra $\Lh_1$ and 
the abelian Lie algebra $\C$. Let $(e_1,\ldots ,e_4)$ be a basis of $\Ln$ with $[e_1,e_2]=e_3$.
Then 
\[
e_1\cdot e_1=e_1\cdot e_4=e_4\cdot e_1=e_4\cdot e_4=e_4
\]
defines a commutative post-Lie algebra structure on $\Ln$, where not all maps $L(x)$
are nilpotent.
\end{ex}

It is also clear that the result holds for all characteristically nilpotent Lie algebras,
because the derivation algebra of such Lie algebras is nilpotent. The above results and some more computational
evidence leads to the following question in general:

\begin{qu}
Is it true that all left multiplication maps are nilpotent for every
commutative post-Lie algebra structure on $\Ln$, where $\Ln$ is a nilpotent Lie algebra 
without abelian factor ?
\end{qu}

\end{document}